\newtheorem{theorem}{Theorem}[section]
\newtheorem{identity}{Identity}[section]
\newtheorem{observation}{Observation}[section]
\newtheorem{corollary}{Corollary}[section]
\newtheorem{lemma}{Lemma}[section]
\newtheorem{remark}{Remark}[section]
\newlist{notes}{enumerate}{1}
\setlist[notes]{label=Note: ,leftmargin=*}
\title{\textbf{On the distance spectrum of the Kronecker product of distance regular graphs}}
\author{ Priti Prasanna Mondal, Fouzul Atik
 \\ \small Department of Mathematics, SRM University-AP, Andhra Pradesh 522240, India.\\ \small e-mail: pritiprasanna1992@gmail.com, fouzulatik@gmail.com, }
\date{}
\begin{document}
\maketitle{}
\begin{abstract}
Consider two simple graphs, $\mathcal{G}_{1}$ and $\mathcal{G}_{2}$, with their respective vertex sets $V(\mathcal{G}{1})$ and $V(\mathcal{G}_{2})$. The Kronecker product, denoted as $\mathcal{G}_{1} \otimes \mathcal{G}_{2}$, forms a new graph with a vertex set $V(\mathcal{G}_{1}) \times V(\mathcal{G}_{2})$. In this new graph, two vertices, $(x,y)$ and $(u,v)$, are adjacent if and only if $xu$ is an edge in $\mathcal{G}_{1}$ and $yv$ is an edge in $\mathcal{G}_{2}$. While the adjacency spectrum of this product is known, the distance spectrum remains unexplored. This article determines the distance spectrum of the Kronecker product for a few families of distance regular graphs. We find the exact polynomial, which expresses the distance matrix $D$ as a polynomial of the adjacency matrix, for two distance regular graphs, Johnson and Hamming graphs. Additionally, we present families of distance integral graphs, shedding light on a previously posted open problem given by Indulal and Balakrishnan in (\emph{AKCE International Journal of Graphs and Combinatorics, 13(3):230–234, 2016}). \\

Keywords: Distance matrix, Kronecker product, Distance regular graphs, Distance spectrum, Integral spectrum.
\end{abstract}

\section{Introduction}
Let $\mathcal{G}=(V,E)$ be a simple connected graph with the vertex set $V=\{1, 2, \ldots, n\}$ and edge set $E$. The distance matrix of $\mathcal{G}$ is denoted by $D(\mathcal{G})$ and defined as $D(\mathcal{G})=(d_{ij})_{n\times n}$, where $d_{ij}$ is the length (number of edges) of the shortest path between vertices $i$ and $j$. The spectrum of a matrix is the set of all eigenvalues of that matrix. Our notation for eigenvalues and their multiplicities is denoted as $\lambda^{(\alpha)}$, where $\alpha$ represents the multiplicity of the eigenvalue $\lambda$. The vector $j_n$ is defined as a column vector of order $n$ with all entries equal to 1. The spectrum of the distance matrix plays a crucial role in various fields of science and engineering. Interest in this topic began in the 1970s with the publication of the paper by Graham and Pollack \cite{grah}. In the mentioned article, the authors demonstrated the eigenvalue inertia of a tree, tackled issues within data communication systems, and established that the determinant of a tree's distance matrix remains unaffected by the tree's structure. The distance matrix and distance spectra find numerous applications, whether in implicit or explicit forms. These applications span various fields, including the design of communication networks \cite{elzin, grah}, network flow algorithms \cite{fred}, graph embedding theory \cite{deza, edel}, and molecular stability \cite{zhou}. Balaban et al. \cite{bala} proposed utilizing the distance spectral radius as a molecular descriptor.

The distance spectra are extremely beneficial in many fields of engineering and science. In the literature, only a few families of graphs with known full-distance spectra exist. Ruzieh and Powers \cite{ruzi} determined all the distance spectra of the path graph $\mathcal{P}_{n}$. Graovac et al. \cite{grao} have provided all the distance eigenvalues of the cycle graph $\mathcal{C}_{n}$.

Many significant classes of graphs can be constructed using graph products. Let $\mathcal{G}$ and $\mathcal{H}$ be two graphs with vertex sets $V(\mathcal{G})={ x_{1}, x_{2}, \ldots, x_{n}}$ and $V(\mathcal{H})={ y_{1}, y_{2}, \ldots, y_{m}}$ respectively. In most cases, the graph product of two graphs $\mathcal{G}$ and $\mathcal{H}$ is a new graph whose vertex set is $V(\mathcal{G}) \times V(\mathcal{H})$, the Cartesian product of the sets $V(\mathcal{G})$ and $V(\mathcal{H})$. The adjacency of two vertices $(x_{i}, y_{j})$ and $(x_{r}, y_{s})$ in a product graph is mainly defined by the adjacency, equality, or non-adjacency of $x_{i}$ and $x_{r}$ in $\mathcal{G}$ and that of $y_{j}$ and $y_{s}$ in $\mathcal{H}$. As a result, graph products can be defined in 256 distinct ways. The four graph products—the Cartesian, the Kronecker, the Strong, and the Lexicographic—are the standard graph products. Much work has been done on the distance matrix of these graph products, except for the Kronecker product. The Kronecker product of the graphs $\mathcal{G}$ and $\mathcal{H}$, denoted as $\mathcal{G}\otimes \mathcal{H}$, is a graph in which $(x_{i}, y_{j})\sim(x_{r},y_{s})$ if $x_{i} \sim x_{r}$ in $\mathcal{G}$ and $y_{j} \sim y_{s}$ in $\mathcal{H}$. 

The investigation of various types of spectra of product graphs is an intriguing topic. In \cite{indu}, Indulal and Gutman have determined all the distance spectra of certain product graphs. Atik and Panigrahi \cite{atik2} constructed the graphs to have a diameter greater than $d\in \{3,4, \cdots, 10\}$, which has exactly $d+1$ distinct distance eigenvalues.  In \cite{indu2}, Indulal has identified the distance eigenvalues of the Cartesian product of transmission regular graphs and the lexicographic product of graphs $\mathcal{G}$ and $\mathcal{H}$, where $\mathcal{H}$ is regular. Barik and Sahoo discuss the distance spectra of the corona operation of graphs in \cite{barik}. Recently, Atik et al. \cite{atik3} have identified all the distance eigenvalues of $m$-generation $n$-prism graphs.

A distance regular graph symbolized as $\mathcal{G}$ is characterized as being both regular and meeting the following criterion: for any pair of vertices $x$ and $y$ in $\mathcal{G}$ with a distance of $i$, there exists a constant number of neighbors $c_{i}$ and $b_{i}$ of $y$ at distances $i-1$ and $i+1$ from $x$, respectively. The numbers $c_{i}$ and $b_{i}$ are known as intersection numbers. Complete graphs, and cycle graphs are trivial examples of distance regular graphs. A Johnson graph $J(m, r)$ is defined as a graph with its vertex set comprising all possible $r$-subsets of an $m$-element set. Two vertices in $J(m, r)$ are considered adjacent if they intersect with exactly $r-1$ elements. This graph exhibits distance regularity with intersection numbers $c_{i}=i^2$, $b_{i}=(r-i)(m-r-i)$, and a diameter $d$ equal to the minimum of $r$ and $m-r$. As $J(m, r)$ is isomorphic to $J(m, m-r)$, we consider $m \ge 2r $ such that its diameter becomes $r.$ The distinct adjacency eigenvalues of $J(m,r)$, as obtained in \cite{brow}, are given by $\lambda_{i}=b_{i}-i$, each with a multiplicity of $\binom{n}{i}-\binom{n}{i-1}$ for $i=0,1, \cdots, r.$ The distinct distance eigenvalues of $J(m, r)$, as obtained in \cite{atik}, are given by $\mu_{0}=s, \mu_{1}=-\frac{s}{n-1}, \mu_{2}=0$ with multiplicity $1, (n-1), \binom{n}{m}-n$, respectively. The Hamming graph $H(d,q)$ of diameter $d$ has vertex set $q^d$, the set of ordered $d$-tuples of $q$-elements set. Two vertices are adjacent if they differ in precisely one coordinate. This is also a distance regular graph with intersection numbers $c_{i}=i, b_{i}=(d-i)(q-1).$  The distinct adjacency eigenvalues of $H(d,q)$, as obtained in \cite{brow}, are given by $\lambda_{i}=b_{0}-qc_{i}$ with multiplicity $\binom{d}{i}(q-1)^{i} $ for $i=0,1, \cdots, d.$ The distinct distance eigenvalues of $H(d,q)$, as obtained in \cite{indu2}, are given by $\mu_{0}=dq^{d-1}(q-1), \mu_{1}=-q^{(d-1)}, \mu_{2}=0$ with multiplicity $1, d(q-1), q^{d}-d(q-1)-1$ respectively.

Aalipour et al. \cite{Aali} determined the distance spectra of some graphs, including distance regular graphs, double odd graphs, and Doob graphs, and characterized strongly regular graphs as having more positive than negative distance eigenvalues. In \cite{atik}, Atik and Panigrahi found the distance spectrum of some distance regular graphs, including the well-known Johnson graphs.

As an operation of graphs, Kronecker product $\mathcal{G} \otimes \mathcal{H}$ was first introduced by Weichesel \cite{weichesel} in 1962. The Kronecker product has gained significant research attention in recent times as an effective technique for building larger networks. However, the distance spectra of the Kronecker product of graphs are not studied yet. In this article, we explore the complete distance spectrum of the Kronecker product applied to a specific set of families distance regular graphs in Theorem \ref{c2m} to Theorem \ref{hamdist}. For distance regular graphs, the distance matrix $D$ can be represented as a polynomial function of the adjacency matrix. We derive these polynomials for both the Johnson and Hamming graphs in Theorem \ref{johnpoly} and Theorem \ref{hampoly}. Additionally, we present two families of distance integral graphs, each with arbitrary diameters. Notably, these graph families are both adjacency integral and distance integral. These findings contribute to addressing the Open Problem presented in {\cite{indu3}}, which aims to identify graph families exhibiting both integral adjacency and distance spectra. 
%%%%%%%%%%%%%%**************************************************************

Next, we have outlined a few useful existing results that will be used in the following sections.

A circulant matrix is characterized as a square matrix wherein each row, starting from the second row, features elements that are cyclically shifted one step to the right compared to the preceding row. It is denoted as 
 \begin{eqnarray*}\label{}\displaystyle {C}=circ(c_{0}, c_{1}, c_{2}, \cdots, c_{n-1})={\begin{bmatrix}  {c_{0}} &{c_{1}} &{c_{2}} &\cdots &{c_{n-2}} & {c_{n-1}}\\
 {c_{n-1}} &{c_{0}} &{c_{1}} &\ddots &{c_{n-3}}& {c_{n-2}}\\
{c_{n-2}} &{c_{n-1}} &{c_{0}} &\ddots &{c_{n-4}}& {c_{n-3}}\\
 \ddots &\ddots &\ddots &\ddots &\ddots &\ddots  \\
{c_{2}}&{c_{3}}&{c_{4}}& \ddots &{c_{0}}&{c_{1}}\\
{c_{1}} &{c_{2}} &{c_{3}} &\ddots &{c_{n-1}}& {c_{0}}
 \end{bmatrix}}.
 \end{eqnarray*}
The eigenvectors and eigenvalues of matrix $ C $ can be represented using the elements of $ C $ and the $n$ th root of unity. We have written this well-known result in the following form:
\begin{lemma}\label{adjc}
Let ${C}=circ(c_{0}, c_{1}, c_{2}, \cdots, c_{n-1})$ and $\rho_{j}=e^{i\pi j/n}=\cos(\frac{2\pi}{n}j) + i\sin (\frac{2\pi}{n}j)$ for $j=0, 1, 2, \cdots, n-1$. Then $V^{(j)}$ is the eigenvector corresponding to the eigenvalues $\lambda_{j}$, where
 \begin{eqnarray*}V^{(j)}=\displaystyle \begin{bmatrix}
    {1}\\
    {\rho_{j}}\\
    {\rho_{j}^{2}}\\
    \vdots\\
    {\rho_{j}^{n-1}}\\
\end{bmatrix} \mbox{~~and~~} \lambda_{j}= c_{0} + c_{1}\rho_{j} + c_{2}\rho_{j}^2 + c_{3}\rho_{j}^3+ \cdots + c_{n-1}\rho_{j}^{n-1}.\end{eqnarray*}
\end{lemma}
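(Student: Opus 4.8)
The plan is to verify directly that $C V^{(j)} = \lambda_j V^{(j)}$ for each $j$, relying on the single structural fact that $\rho_j$ is an $n$-th root of unity, i.e. $\rho_j^n = 1$ (which follows from the expression $\rho_j = \cos(\tfrac{2\pi}{n}j) + i\sin(\tfrac{2\pi}{n}j)$). First I would record the entry pattern read off the displayed matrix: indexing rows and columns by $0, 1, \ldots, n-1$, the $(k,\ell)$ entry of $C$ is $c_{(\ell - k) \bmod n}$, since each row is the preceding one cyclically shifted one step to the right.

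Next I would compute the $k$-th coordinate of $C V^{(j)}$. Since the $\ell$-th coordinate of $V^{(j)}$ is $\rho_j^{\ell}$, we obtain
\[ \bigl(C V^{(j)}\bigr)_k = \sum_{\ell=0}^{n-1} c_{(\ell-k) \bmod n}\, \rho_j^{\ell}. \]
The key step is the change of summation index $m = (\ell - k) \bmod n$: as $\ell$ runs over a complete residue system modulo $n$, so does $m$, and because $\rho_j^n = 1$ we may replace $\rho_j^{\ell} = \rho_j^{(m+k) \bmod n}$ by $\rho_j^{m+k}$ without changing its value. This yields
\[ \bigl(C V^{(j)}\bigr)_k = \sum_{m=0}^{n-1} c_m\, \rho_j^{m+k} = \rho_j^{k} \sum_{m=0}^{n-1} c_m \rho_j^{m} = \rho_j^{k}\, \lambda_j = \lambda_j\, \bigl(V^{(j)}\bigr)_k. \]
Since this holds for every coordinate $k$, I conclude $C V^{(j)} = \lambda_j V^{(j)}$, so $V^{(j)}$ is indeed an eigenvector of $C$ with eigenvalue $\lambda_j$.

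There is no genuine obstacle here; the only thing requiring care is the index arithmetic and the wraparound, since the entire argument hinges on $\rho_j^n = 1$ permitting the exponent to be read modulo $n$. As a concluding remark one could add that the $n$ vectors $V^{(0)}, \ldots, V^{(n-1)}$ are linearly independent, as they form a Vandermonde matrix built from the distinct $n$-th roots of unity, so the displayed eigenpairs in fact diagonalize $C$ completely; the statement as phrased, however, only asserts the eigenvalue–eigenvector relation, which the direct computation above establishes.
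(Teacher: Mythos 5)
Your proof is correct. The paper does not actually prove this lemma---it is stated as a well-known fact about circulant matrices and used without justification---so there is no authorial argument to compare against; your direct verification that $(CV^{(j)})_k=\sum_{\ell}c_{(\ell-k)\bmod n}\rho_j^{\ell}=\rho_j^{k}\lambda_j$, hinging only on $\rho_j^{n}=1$, is exactly the standard argument one would supply. One small observation: the statement's formula $\rho_j=e^{i\pi j/n}$ is inconsistent with the accompanying expression $\cos(\tfrac{2\pi}{n}j)+i\sin(\tfrac{2\pi}{n}j)$ (the exponent should be $2\pi ij/n$); you implicitly and correctly worked with the trigonometric form, which is the one the rest of the paper relies on, so your argument is unaffected.
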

Using the above eigenvalue and eigenvector relation on the circulant matrix, we can write the following lemma:

\begin{lemma}
    If $\lambda_{j}= a_{0} + a_{1}\rho_{j} + a_{2}\rho_{j}^2 + a_{3}\rho_{j}^3+ \cdots + a_{n-1}\rho_{j}^{n-1}$ is an eigenvalue of $A=circ(a_{0}, a_{1}, a_{2}, \cdots, a_{n-1})$ and $\mu_{j}= b_{0} + b_{1}\rho_{j} + b_{2}\rho_{j}^2 + b_{3}\rho_{j}^3+ \cdots + b_{n-1}\rho_{j}^{n-1}$ is an eigenvalue of $B=circ(b_{0}, b_{1}, b_{2}, \cdots, b_{n-1})$, then for any two scalars $s$ and $t$, the eigenvalues of $sA+tB$, are $s\lambda_{j}+t\mu_{j}$ for $j=0, 1, 2, \cdots, n-1.$  
\end{lemma}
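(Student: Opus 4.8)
The plan is to exploit the fact, made explicit in Lemma \ref{adjc}, that every circulant matrix of order $n$ is diagonalized by the \emph{same} family of eigenvectors $V^{(j)}$, built from the $n$-th roots of unity $\rho_{j}$, independently of the particular entries of the matrix. Since these eigenvectors are determined only by $\rho_{j}$ and not by the first row of the circulant, they serve simultaneously as eigenvectors for $A$, for $B$, and for any linear combination of the two. This common eigenbasis is the entire engine of the argument.

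First I would record that $sA+tB$ is again a circulant matrix, namely $sA+tB = circ(sa_{0}+tb_{0},\, sa_{1}+tb_{1},\, \ldots,\, sa_{n-1}+tb_{n-1})$, because the circulant matrices of a fixed order form a vector space closed under addition and scalar multiplication. Consequently Lemma \ref{adjc} applies verbatim to $sA+tB$, and its spectrum can be read off from its first row.

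Next I would verify the eigenvalue formula by evaluating $sA+tB$ on the common eigenvector $V^{(j)}$. Using $AV^{(j)}=\lambda_{j}V^{(j)}$ and $BV^{(j)}=\mu_{j}V^{(j)}$ from Lemma \ref{adjc}, linearity gives
\begin{equation*}
(sA+tB)V^{(j)} = s\bigl(AV^{(j)}\bigr) + t\bigl(BV^{(j)}\bigr) = s\lambda_{j}V^{(j)} + t\mu_{j}V^{(j)} = (s\lambda_{j}+t\mu_{j})V^{(j)},
\end{equation*}
so $s\lambda_{j}+t\mu_{j}$ is an eigenvalue of $sA+tB$ with eigenvector $V^{(j)}$ for each $j=0,1,\ldots,n-1$.

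Finally, to conclude that these are \emph{all} $n$ eigenvalues, I would note that $V^{(0)}, V^{(1)}, \ldots, V^{(n-1)}$ are linearly independent: they are the columns of the Vandermonde matrix associated with the $n$ distinct roots of unity $\rho_{0}, \ldots, \rho_{n-1}$, whose determinant is nonzero. Hence $\{V^{(j)}\}$ is a basis of $\mathbb{C}^{n}$ consisting of eigenvectors of $sA+tB$, which exhausts the spectrum. There is no genuine obstacle here; the proof is essentially immediate once one observes the shared eigenbasis. The only point deserving a moment's care is this last completeness step—the linear independence of the $V^{(j)}$ (equivalently, invertibility of the Fourier matrix)—which ensures that we have captured every eigenvalue with the correct indexing rather than a proper subset.
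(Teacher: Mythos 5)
Your proof is correct and follows exactly the route the paper intends: the paper offers no written proof at all, merely asserting that the lemma follows from the common eigenvectors $V^{(j)}$ of Lemma \ref{adjc}, which is precisely the shared-eigenbasis argument you spell out. Your additional remark on the linear independence of the $V^{(j)}$ (invertibility of the Vandermonde/Fourier matrix) makes explicit the completeness step that the paper leaves tacit.
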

Next, we discuss the eigenvalues of the block circulant matrices. The class of block circulant matrices is denoted as $\mathcal{BC}_{n,k}$, with matrix partitioned by $n$ and all square submatrices of order $k>1$. Let ${B}\in \mathcal{BC}_{n,k}$, then ${B}=Circ(\textbf{b}_{0}, \textbf{b}_{1}, \textbf{b}_{2}, \cdots, \textbf{b}_{n-1}) $ where $\textbf{b}_{i}$'s are square submatrices of order $k$ for $i=0,1,2, \cdots, (n-1).$ 
\begin{eqnarray*}\label{}\displaystyle {B}=Circ(\textbf{b}_{0}, \textbf{b}_{1}, \textbf{b}_{2}, \cdots, \textbf{b}_{n-1})={\begin{bmatrix}  \textbf{b}_{0} &\textbf{b}_{1} &\textbf{b}_{2} &\cdots &\textbf{b}_{n-2} & \textbf{b}_{n-1}\\
 \textbf{b}_{n-1} &\textbf{b}_{0} &\textbf{b}_{1} &\ddots &\textbf{b}_{n-3}& \textbf{b}_{n-2}\\
\textbf{b}_{n-2} &\textbf{b}_{n-1} &\textbf{b}_{0} &\ddots &\textbf{b}_{n-4}& \textbf{b}_{n-3}\\
\ddots &\ddots &\ddots &\ddots &\ddots &\ddots  \\
\textbf{b}_{2}&\textbf{b}_{3}&\textbf{b}_{4}& \ddots &\textbf{b}_{0}&\textbf{b}_{1}\\
\textbf{b}_{1} &\textbf{b}_{2} &\textbf{b}_{3} &\ddots &\textbf{b}_{n-1}& \textbf{b}_{0}
 \end{bmatrix}}.
 \end{eqnarray*}
 
In \cite{garry}, the eigenvalues are computed for the block circulant matrix with the help of smaller size matrices $H_{j}$. We have reduced the result for the real symmetric block circulant matrix $B$ in theorem form as follows:
\begin{theorem}\emph{\cite{garry}} \label{bcir}
  Let  ${B}=Circ(\textbf{b}_{0}, \textbf{b}_{1}, \textbf{b}_{2}, \cdots, \textbf{b}_{n-1}) $ be a real symmetric block circulant matrix, where $\textbf{b}_{i}$'s are square submatrices of order $k$ for $i=0,1,2, \cdots, (n-1).$ Let 
  \begin{eqnarray*}
  {H}_{j}= \textbf{b}_{0}+ \displaystyle\sum_{f=1}^{h-1}\big[\textbf{b}_{f}\rho_{j}^{f}+\textbf{b}_{f}^T\Bar{\rho_{j}^{f}}\big] +\bigg\{\begin{array}{cc}
          \textbf{0}~~~~~~& \mbox{if $n=2h-1$} \\
         \textbf{b}_{h}(-1)^{j} ~~~~~~& \mbox{if $n=2h$}.
         \end{array}
         \end{eqnarray*} Then all the eigenvalues of ${B}$ are the union of the eigenvalues of all ${H}_{j},$ $j=0, 1, 2, \cdots, n-1.$
\end{theorem}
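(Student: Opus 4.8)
The plan is to reduce the block circulant matrix $B$ to a block-diagonal form by exploiting the eigenstructure of the underlying cyclic shift, exactly as in the scalar case recorded in Lemma \ref{adjc}, and then to use the symmetry hypothesis $B=B^{T}$ to collapse each complex $k\times k$ block into the stated matrix $H_{j}$.

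First I would set $\rho_{j}=\cos(\tfrac{2\pi}{n}j)+i\sin(\tfrac{2\pi}{n}j)$ as in Lemma \ref{adjc} and, mirroring the scalar eigenvectors $V^{(j)}$, consider for each $j$ the $k$ vectors $\mathbf{v}_{j,\ell}=V^{(j)}\otimes e_{\ell}$, $\ell=1,\ldots,k$, where $e_{\ell}$ is the $\ell$-th standard basis vector of $\mathbb{C}^{k}$. Writing $B$ in block form with $(p,q)$ block equal to $\mathbf{b}_{(q-p)\bmod n}$, I would compute the $p$-th block of $B\mathbf{v}_{j,\ell}$ directly: substituting $r=(q-p)\bmod n$ and using $\rho_{j}^{n}=1$ gives
\[
(B\mathbf{v}_{j,\ell})_{p}=\sum_{q=0}^{n-1}\mathbf{b}_{(q-p)\bmod n}\,\rho_{j}^{q}e_{\ell}=\rho_{j}^{p}\Big(\sum_{r=0}^{n-1}\mathbf{b}_{r}\rho_{j}^{r}\Big)e_{\ell}.
\]
Hence, setting $M_{j}=\sum_{r=0}^{n-1}\mathbf{b}_{r}\rho_{j}^{r}$, one obtains $B\mathbf{v}_{j,\ell}=V^{(j)}\otimes(M_{j}e_{\ell})$. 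This shows that each subspace $W_{j}=V^{(j)}\otimes\mathbb{C}^{k}$ is invariant under $B$ and that the action of $B$ on $W_{j}$, expressed in the basis $\{\mathbf{v}_{j,\ell}\}_{\ell}$, is precisely multiplication by $M_{j}$. Since the $V^{(j)}$ are mutually orthogonal and span $\mathbb{C}^{n}$, the subspaces $W_{0},\ldots,W_{n-1}$ are mutually orthogonal with direct sum all of $\mathbb{C}^{nk}$; therefore the spectrum of $B$ is the union of the spectra of $M_{0},\ldots,M_{n-1}$.

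The remaining step, where the symmetry hypothesis enters, is to identify $M_{j}$ with $H_{j}$. From $B=B^{T}$ and the block-circulant structure one reads off $\mathbf{b}_{0}=\mathbf{b}_{0}^{T}$ and $\mathbf{b}_{n-f}=\mathbf{b}_{f}^{T}$ for $f=1,\ldots,n-1$. I would then split the sum defining $M_{j}$ into the term $f=0$, the upper half $1\le f\le h-1$, the symmetric partners $h+1\le f\le n-1$, and a possible middle index. Re-indexing the partner terms via $f\mapsto n-f$ and using both $\mathbf{b}_{n-f}=\mathbf{b}_{f}^{T}$ and $\rho_{j}^{n-f}=\overline{\rho_{j}^{f}}$ pairs each $\mathbf{b}_{f}\rho_{j}^{f}$ with $\mathbf{b}_{f}^{T}\overline{\rho_{j}^{f}}$, producing the sum $\sum_{f=1}^{h-1}\big[\mathbf{b}_{f}\rho_{j}^{f}+\mathbf{b}_{f}^{T}\overline{\rho_{j}^{f}}\big]$. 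For odd $n=2h-1$ there is no leftover middle index, yielding the first case; for even $n=2h$ the index $f=h$ is its own partner, and since $\rho_{j}^{h}=e^{\pi i j}=(-1)^{j}$ and $\mathbf{b}_{h}=\mathbf{b}_{h}^{T}$ is symmetric, it contributes the term $\mathbf{b}_{h}(-1)^{j}$, yielding the second case. In both cases $M_{j}=H_{j}$, which completes the identification.

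I expect the only genuine bookkeeping hurdle to be the case analysis in this last step: correctly matching the summation ranges to their partner indices and confirming that the self-paired middle block appears exactly when $n$ is even. Everything else follows from the invariant-subspace computation, which is a direct block analogue of the scalar circulant diagonalization already established in Lemma \ref{adjc}.
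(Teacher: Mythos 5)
Your argument is correct and complete. The paper itself offers no proof of this statement --- it is quoted verbatim from the cited reference \cite{garry} --- so there is no internal proof to compare against; what you have written is a self-contained derivation along the standard lines used in that reference: block-diagonalize $B$ over the invariant subspaces $W_{j}=V^{(j)}\otimes\mathbb{C}^{k}$ spanned by the DFT eigenvectors of the cyclic shift, obtaining the representer $M_{j}=\sum_{r}\mathbf{b}_{r}\rho_{j}^{r}$, and then use $\mathbf{b}_{n-f}=\mathbf{b}_{f}^{T}$ together with $\rho_{j}^{n-f}=\overline{\rho_{j}^{f}}$ to fold the sum into the stated $H_{j}$, with the self-paired index $f=h$ contributing $\mathbf{b}_{h}(-1)^{j}$ exactly when $n=2h$. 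All the individual steps check out: the computation $(B\mathbf{v}_{j,\ell})_{p}=\rho_{j}^{p}M_{j}e_{\ell}$ is valid because $\rho_{j}^{n}=1$, the $W_{j}$ exhaust $\mathbb{C}^{nk}$ since the $V^{(j)}$ form a basis of $\mathbb{C}^{n}$, and the symmetry relations $\mathbf{b}_{0}=\mathbf{b}_{0}^{T}$, $\mathbf{b}_{n-f}=\mathbf{b}_{f}^{T}$ are precisely what $B=B^{T}$ says blockwise. One cosmetic remark: the paper's Lemma \ref{adjc} contains a typo ($\rho_{j}=e^{i\pi j/n}$ rather than $e^{2\pi ij/n}$); you have used the correct value, consistent with the trigonometric form given there, so nothing in your argument is affected.
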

The following results provide information regarding the connectivity and diameter of the graph resulting from the Kronecker product and the Cartesian product.
\begin{lemma}\emph{\cite{weichesel}} \label{connected}
Let $\mathcal{G}=\mathcal{G}_{1}\otimes \mathcal{G}_{2}$ be the Kronecker product of simple connected graphs $\mathcal{G}_{1}$ and $\mathcal{G}_{2}$. Then $\mathcal{G}$ is connected if and only if either $\mathcal{G}_{1}$ or $\mathcal{G}_{2}$  contains an odd cycle.  
\end{lemma}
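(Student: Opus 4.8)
The plan is to exploit the defining feature of the Kronecker product: a walk of length $k$ in $\mathcal{G}_{1}\otimes\mathcal{G}_{2}$ projects, coordinate-wise, to a pair of walks of the \emph{same} length $k$, one in $\mathcal{G}_{1}$ and one in $\mathcal{G}_{2}$, and conversely any two equal-length walks in the factors splice together into a single walk in the product. First I would record this as the central observation: the vertices $(x,y)$ and $(u,v)$ lie in the same component of $\mathcal{G}_{1}\otimes\mathcal{G}_{2}$ if and only if there exists an integer $k$ that is simultaneously the length of some $x$--$u$ walk in $\mathcal{G}_{1}$ and of some $y$--$v$ walk in $\mathcal{G}_{2}$.

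Next I would analyse, for a connected graph $\mathcal{H}$ with at least one edge, the set $W_{\mathcal{H}}(a,b)$ of lengths of walks from $a$ to $b$. Padding any walk with a back-and-forth traversal of an incident edge shows that $W_{\mathcal{H}}(a,b)$ is closed under adding $2$. If $\mathcal{H}$ is bipartite, then all $a$--$b$ walks share a single parity, so $W_{\mathcal{H}}(a,b)$ lies in one residue class modulo $2$; if $\mathcal{H}$ contains an odd cycle, then routing a walk through that cycle flips the parity, so $W_{\mathcal{H}}(a,b)$ contains every sufficiently large integer of both parities.

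With these facts the two directions follow. For the ``if'' direction, assume (say) that $\mathcal{G}_{1}$ contains an odd cycle; pick any length $\ell$ of a $y$--$v$ walk in $\mathcal{G}_{2}$ and enlarge it by multiples of $2$ until it exceeds the threshold beyond which $W_{\mathcal{G}_{1}}(x,u)$ contains all integers. This common length splices to a walk in the product, so every two vertices are joined and $\mathcal{G}_{1}\otimes\mathcal{G}_{2}$ is connected. For the ``only if'' direction I argue by contrapositive: if both factors are bipartite, with bipartitions $X_{1}\cup X_{2}$ and $Y_{1}\cup Y_{2}$, fix $x\in X_{1}$, $y\in Y_{1}$, $y'\in Y_{2}$; then every $x$--$x$ walk in $\mathcal{G}_{1}$ has even length while every $y$--$y'$ walk in $\mathcal{G}_{2}$ has odd length, so no common length exists and $(x,y)$, $(x,y')$ lie in different components.

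The one genuine subtlety, and the step I would treat most carefully, is the parity claim for non-bipartite graphs: one must produce, for \emph{every} ordered pair $(a,b)$, a uniform threshold past which walks of both parities exist. The clean way is to begin from a fixed shortest $a$--$b$ walk, reach a prescribed odd cycle, loop around it once to flip the parity, and return, bounding the extra length in terms of the diameter and the odd girth; combined with closure under $+2$ this yields all large lengths. The degenerate case in which a factor has no edges, so that the product is edgeless, is handled separately and is consistent with the statement, since such a factor contains no odd cycle.
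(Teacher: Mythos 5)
The paper does not prove this lemma; it is imported verbatim from Weichsel's 1962 paper, so there is no internal proof to compare against. Your argument is the standard proof of Weichsel's theorem and is essentially sound: the characterization of components via common walk lengths, the closure of walk-length sets under adding $2$, the parity dichotomy between bipartite factors and factors with an odd cycle, and the splicing/contrapositive split are exactly the classical route (as in Weichsel's original paper and in Hammack--Imrich--Klav\v{z}ar). The uniform-threshold step you flag as the subtle point is handled correctly by routing through a fixed odd cycle and bounding the detour. The only inaccuracy is your remark on the degenerate case: if one factor is $K_{1}$ (connected, edgeless), the product is edgeless and hence disconnected whenever the other factor has at least two vertices, \emph{even if that other factor contains an odd cycle} --- so the ``if'' direction genuinely fails there rather than being ``consistent with the statement.'' This is a defect of the statement as loosely quoted (Weichsel assumes both factors have at least one edge), not of your main argument, but your proof should state that hypothesis explicitly rather than claim the edgeless case is covered.
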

\begin{lemma} \emph{(\cite{imri}, Lemma 1.2)} \label{cerconnec}
    A Cartesian product $\mathcal{G} \Box \mathcal{H}$ is connected if and only if both factors are connected.
\end{lemma}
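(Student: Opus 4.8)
The plan is to prove both directions of the biconditional directly from the definition of the Cartesian product: in $\mathcal{G} \Box \mathcal{H}$ the vertices $(x_1,y_1)$ and $(x_2,y_2)$ are adjacent precisely when either $x_1=x_2$ and $y_1\sim y_2$ in $\mathcal{H}$, or $y_1=y_2$ and $x_1\sim x_2$ in $\mathcal{G}$. The two coordinates therefore behave almost independently, and the whole proof rests on lifting walks from the factors to the product and, conversely, projecting walks from the product down to the factors.

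For the forward direction (both factors connected $\Rightarrow$ product connected) I would fix two arbitrary product-vertices $(x_1,y_1)$ and $(x_2,y_2)$ and build an explicit walk between them. Since $\mathcal{G}$ is connected there is a walk $x_1=u_0,u_1,\ldots,u_p=x_2$ in $\mathcal{G}$; holding the second coordinate fixed at $y_1$, each edge $u_i\sim u_{i+1}$ lifts to an edge $(u_i,y_1)\sim(u_{i+1},y_1)$ in the product, yielding a walk from $(x_1,y_1)$ to $(x_2,y_1)$. Connectivity of $\mathcal{H}$ similarly gives a walk $y_1=v_0,\ldots,v_q=y_2$ whose lift with first coordinate fixed at $x_2$ is a walk from $(x_2,y_1)$ to $(x_2,y_2)$. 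Concatenating the two produces a walk between the chosen vertices, so $\mathcal{G} \Box \mathcal{H}$ is connected.

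For the converse I would argue by contrapositive via the coordinate projections $\pi_1(x,y)=x$ and $\pi_2(x,y)=y$. The crucial observation is that every edge of $\mathcal{G} \Box \mathcal{H}$ leaves one coordinate unchanged while moving the other along an edge of its own factor; hence the image under $\pi_1$ of any walk in the product is a walk in $\mathcal{G}$ (permitting consecutive repeats), and likewise $\pi_2$ sends product-walks to walks in $\mathcal{H}$. Suppose $\mathcal{G}$ is disconnected, so $V(\mathcal{G})$ splits into nonempty classes $A$ and $B$ with no edge between them; choosing $a\in A$, $b\in B$ and any $y\in V(\mathcal{H})$, a walk in the product from $(a,y)$ to $(b,y)$ would project under $\pi_1$ to a walk from $a$ to $b$ in $\mathcal{G}$, which is impossible. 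Thus the product is disconnected, and by the symmetric argument the same conclusion follows if $\mathcal{H}$ is disconnected.

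The argument is elementary and I do not anticipate a genuine obstacle. The only point requiring care is the projection step in the converse: one must verify that a product-edge projects under $\pi_1$ either to a single vertex or to a genuine edge of $\mathcal{G}$ (never to a non-adjacent pair), so that walks descend to walks. Once this is made precise, the two directions together close the proof.
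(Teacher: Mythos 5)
Your proof is correct and is the standard argument for this classical fact: lifting walks from the factors for sufficiency, and projecting product-walks onto the coordinates for necessity, with the key observation that each product-edge fixes one coordinate and moves the other along an edge. The paper itself gives no proof of this lemma---it is quoted from Imrich and Klav\v{z}ar (\cite{imri}, Lemma 1.2)---so there is nothing to compare against, but your argument is complete and would serve as a valid proof of the cited statement.
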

Let $\gamma (\mathcal{G}; x,y)$ denote the minimum integer such that there exists an $(x,y)$-walk of length $k$ for any $k\ge \gamma (\mathcal{G}; x,y). $ And $\gamma (\mathcal{G})$  is defined by,
$\gamma (\mathcal{G})= max\{ \gamma(\mathcal{G}; x,y): x, y \in V(\mathcal{G})\}$. Now the next result tells us about the diameter of the Kronecker product of graphs.

Let $\gamma (\mathcal{G}; x, y)$ represent the smallest integer ensuring the existence of a walk of length $k$ for any $k\ge \gamma (\mathcal{G}; x, y)$ between vertices $x$ and $y$. The quantity $\gamma (\mathcal{G})$ is defined as follows: \[\gamma (\mathcal{G}) = \max\{ \gamma(\mathcal{G}; x, y): x, y \in V(\mathcal{G})\}.\] Now the next result tells us about the diameter of the Kronecker product of graphs.

\begin{theorem}\emph{\cite{hu}} \label{dia}
    Let $\mathcal{G}$ be a connected graph with diameter $d\ge 1$ and $\mathcal{H}$ be a complete $t$-partite graph with $t>3.$ Then 
    \begin{eqnarray*}
        d(\mathcal{G} \otimes \mathcal{H})=\left \{\begin{array}{cc}
           d,  & d\ge3;  \\
          2,   &  d \leq 2 ~~and~~ \gamma(G) \leq 2;\\
          3, & d \leq 2 ~~and~~ \gamma(G) \ge 2
\end{array}. \right.
    \end{eqnarray*}
\end{theorem}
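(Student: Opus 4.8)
The engine of the whole argument is the elementary observation that a walk in a Kronecker product projects to synchronized walks in the two factors. Precisely, there is a walk of length $k$ from $(x_1,y_1)$ to $(x_2,y_2)$ in $\mathcal{G}\otimes\mathcal{H}$ if and only if there is a walk of length $k$ from $x_1$ to $x_2$ in $\mathcal{G}$ \emph{and} a walk of length $k$ from $y_1$ to $y_2$ in $\mathcal{H}$; this is immediate from the adjacency rule $(x,y)\sim(u,v)\iff x\sim u,\ y\sim v$ together with induction on $k$. Writing $W_{\mathcal{G}}(x_1,x_2)$ for the set of lengths of $x_1$--$x_2$ walks (and similarly for $\mathcal{H}$), the distance in the product is therefore
\[
d_{\otimes}\big((x_1,y_1),(x_2,y_2)\big)=\min\big(W_{\mathcal{G}}(x_1,x_2)\cap W_{\mathcal{H}}(y_1,y_2)\big).
\]
Since $\mathcal{H}$ is complete $t$-partite with $t>3$ it contains triangles, so by Lemma~\ref{connected} the product is connected and every such intersection is nonempty; the diameter is the maximum of the displayed quantity over all pairs.

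Second, I would pin down the walk-length set for the factor $\mathcal{H}$. For a complete $t$-partite graph with $t\ge 3$ I claim $W_{\mathcal{H}}(y_1,y_2)\supseteq\{2,3,4,\dots\}$ for every ordered pair $y_1,y_2$ (including $y_1=y_2$); equivalently $\gamma(\mathcal{H})=2$. A length-$2$ walk always exists because any two vertices (equal, adjacent, or in a common part) have a common neighbour once there are at least three parts, and a length-$3$ walk exists because every vertex lies in a triangle; padding any walk by a back-and-forth step of length $2$ then produces walks of all larger lengths. Thus in the product the $\mathcal{H}$-coordinate never obstructs a walk of length $\ge 2$, and the only possible obstruction of a single step is the non-adjacency $y_1\not\sim y_2$.

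For the case $d\ge 3$ I would prove equality in two halves. For the lower bound, pick $x_1,x_2$ with $d_{\mathcal{G}}(x_1,x_2)=d$; then every $x_1$--$x_2$ walk has length $\ge d$, so $W_{\mathcal{G}}(x_1,x_2)\subseteq\{d,d+1,\dots\}$ and $d_{\otimes}\ge d$ for any choice of second coordinates. For the upper bound, fix an arbitrary pair and set $\delta=d_{\mathcal{G}}(x_1,x_2)\le d$. If $\delta\ge 2$, the shortest $\mathcal{G}$-path has length $\delta\ge 2$, which also lies in $W_{\mathcal{H}}$, giving $d_{\otimes}\le\delta\le d$. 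If $\delta=1$, the edge can be traversed back and forth to give a length-$3$ walk in $\mathcal{G}$, and $3\le d$ lies in $W_{\mathcal{H}}$, so $d_{\otimes}\le 3\le d$. If $\delta=0$, a neighbour bounce gives a length-$2$ walk and $d_{\otimes}\le 2$. Hence the diameter is exactly $d$.

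The genuinely delicate case is $d\le 2$, and this is where I expect the main difficulty. Here every pair of distinct $\mathcal{G}$-vertices has distance $1$ or $2$, so $W_{\mathcal{G}}$ always meets $\{1,2\}$, and combined with $W_{\mathcal{H}}\supseteq\{2,3,\dots\}$ one checks that $d_{\otimes}\le 3$ for every pair, while the vertices $(x_1,y)$ and $(x_2,y)$ with $x_1\ne x_2$ are never adjacent, forcing the diameter to be at least $2$. It remains to decide between $2$ and $3$, and the plan is to show that a pair at distance exactly $3$ exists if and only if $\mathcal{G}$ has an edge lying in no triangle: such an edge $x_1x_2$ has $2\notin W_{\mathcal{G}}(x_1,x_2)$, so pairing it with any $y_1,y_2$ that are non-adjacent (for instance $y_1=y_2$, since $\mathcal{H}$ has no loops) leaves $3$ as the smallest common length. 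The remaining work is to match this triangle condition to the hypothesis on $\gamma(\mathcal{G})$: for $d\le 2$ one verifies that $\gamma(\mathcal{G})\le 2$ holds precisely when every edge lies in a triangle (the back-and-forth and triangle-insertion arguments then furnish all walk lengths $\ge 2$ between any two vertices), whereas an edge in no triangle forces $\gamma(\mathcal{G})\ge 3$. Carrying out this equivalence cleanly, and reconciling it with the boundary value $\gamma(\mathcal{G})=2$ that appears in both stated subcases, is the step that needs the most care.
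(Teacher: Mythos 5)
The paper does not prove this statement: it is quoted verbatim from the cited reference \cite{hu} and used as a black box, so there is no in-paper argument to compare yours against. Judged on its own, your proof is the natural one and is essentially correct. The walk-synchronization identity $d_{\otimes}=\min\bigl(W_{\mathcal{G}}\cap W_{\mathcal{H}}\bigr)$ follows from $A(\mathcal{G}\otimes\mathcal{H})^k=A(\mathcal{G})^k\otimes A(\mathcal{H})^k$; your computation $W_{\mathcal{H}}(y_1,y_2)\supseteq\{2,3,\dots\}$ for a complete multipartite graph with at least three parts is right (note it only needs $t\ge 3$, so you in fact prove slightly more than the stated $t>3$); and the $d\ge 3$ case is complete as written. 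The $d\le 2$ case you leave as a plan, but the asserted equivalence does go through: for a connected graph of diameter at most $2$, the set $W_{\mathcal{G}}(x_1,x_2)$ contains $2$ for every pair except possibly when $x_1x_2$ is an edge in no triangle, and in that exceptional case $W_{\mathcal{G}}(x_1,x_2)=\{1,3,5,\dots\}\cup(\text{odd lengths})$ misses $2$, so pairing with $y_1=y_2$ forces distance exactly $3$; conversely, if every edge lies in a triangle one gets walks of both lengths $2$ and $3$ between any two vertices, hence $\gamma(\mathcal{G})\le 2$ and product diameter $2$. Your observation that the two subcases of the statement overlap at $\gamma(\mathcal{G})=2$ is also correct --- as transcribed in the paper the conditions are not mutually exclusive (the third case should read $\gamma(\mathcal{G})>2$, equivalently $\gamma(\mathcal{G})\ge 3$), and your triangle criterion resolves the boundary the right way: $\gamma(\mathcal{G})=2$ belongs with diameter $2$. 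The only loose end worth tightening is the definition of $\gamma$ for bipartite factors (e.g.\ $\mathcal{G}=K_{a,b}$), where no finite $\gamma(\mathcal{G};x,y)$ exists; your ``edge in no triangle forces $\gamma(\mathcal{G})\ge 3$'' covers this provided you read $\gamma=\infty$ there.
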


    A square Vandermode matrix is an $(n+1)\times (n+1)$ matrix of the form
    \begin{eqnarray*}\label{}\displaystyle {V}=V({x}_{0}, {x}_{1}, {x}_{2}, \cdots, {x}_{n})={\begin{bmatrix}  {1} & {x}_{0} & {x}_{0}^{2} &\cdots &{x}_{0}^{n} \\
 {1} & {x}_{1} & {x}_{1}^{2} &\cdots & {x}_{1}^{n} \\
 {1} & {x}_{2} & {x}_{2}^{2} &\cdots & {x}_{2}^{n} \\
\vdots &\vdots &\vdots &\ddots &\vdots   \\
{1} & {x}_{n}& {x}_{n}^{2}& \cdots & {x}_{n}^{n}
 \end{bmatrix}}.
 \end{eqnarray*}
 
The determinant of the above Vandermonde matrix, $ det(V)=\displaystyle \prod_{0\le i < j \le n} (x_{j}-x_{i})$, is called Vandermonde determinant or Vandermonde polynomial. 
The Vandermonde determinant is nonzero exclusively when all $x_{i}$ values are distinct. So, if all $x_{i}$ are distinct, then the square Vandermonde matrix is invertible. The inverse matrix $V^{-1}$ can be computed by Lagrange interpolation.
\begin{eqnarray}\label{vaninv}V^{-1}=\displaystyle {\begin{bmatrix}  {1} & {x}_{0} & {x}_{0}^{2} &\cdots &{x}_{0}^{n} \\
 {1} & {x}_{1} & {x}_{1}^{2} &\cdots & {x}_{1}^{n} \\
 {1} & {x}_{2} & {x}_{2}^{2} &\cdots & {x}_{2}^{n} \\
\vdots &\vdots &\vdots &\ddots &\vdots   \\
{1} & {x}_{n}& {x}_{n}^{2}& \cdots & {x}_{n}^{n}
 \end{bmatrix}^{-1}}=L=\displaystyle {\begin{bmatrix}  {L}_{00} & {L}_{01} &\cdots &{L}_{0n} \\
  {L}_{10} & {L}_{11} &\cdots & {L}_{1n} \\
  {L}_{20} & {L}_{21} &\cdots & {L}_{2n} \\
\vdots &\vdots &\ddots &\vdots   \\
 {L}_{n0}& {L}_{n1}& \cdots & {L}_{nn}
 \end{bmatrix}}.
 \end{eqnarray}
 The columns of the inverse matrix are the coefficients of the Lagrange polynomials $L_{j}(x)=L_{0j}+L_{1j}x+ \cdots +L_{nj}x^{n}=\displaystyle \prod_{\substack{0\le i \le n\\ i\neq j}}\frac{(x-x_{i})}{(x_{j}-x_{i})}.$ This is easily demonstrated: the polynomials clearly satisfy $L_{j}(x_{i})=0$ for $i\neq j$ while $L_{j}(x_{j})=1.$ So, we may compute the product $VL=[L_{j}(x_{i})]_{i,j=0}^{n}=I$, the identity matrix.

\section{Some preliminary results}
In this section, we have derived some results in a specialized format that will be utilized in the forthcoming sections.

Let $\mathbb{C}$ be the set of all complex numbers. Here we prove an identity which is the sum of a finite series whose terms are term by term product of some A. P. and G. P. series.
\begin{identity} \label{ide}
Let $ a, d, r \in \mathbb{C}$ and $r\neq 1 $. Then ~ $a + (a+d)r + (a+2d)r^2 + (a+3d)r^3 + \cdots + (a+(n-1)d)r^{n-1} = \left[a+(n-1)d\right]\frac{r^n -1}{r-1} - \frac{d}{r-1}\left[\frac{r^n -1}{r-1} - n\right]$
\end{identity}

\begin{proof} We can write the following steps as follows:
\begin{eqnarray}\displaystyle
   \nonumber a + ar + ar^2 + ar^3 + \cdots + ar^{n-1} &=& a\frac{r^n -1}{r -1}\\
\nonumber dr + dr^2 + dr^3 + \cdots + dr^{n-1} &=& d\frac{r^n -1}{r -1} - d\frac{(r -1)}{r -1}\\
 \nonumber dr^2 + dr^3 + \cdots + dr^{n-1} &=& d\frac{r^n -1}{r -1} - d\frac{(r^2 -1)}{r -1}\\
  \nonumber \ddots ~~~~ \ddots ~~~~\ddots && ~~~~~~~ \vdots  ~~~ ~~~~~~~~~\vdots   \\
 \nonumber dr^{n-2} + dr^{n-1} &=& d\frac{r^n -1}{r -1} - d\frac{(r^{n-2} -1)}{r -1}\\
 \nonumber dr^{n-1} &=& d\frac{r^n -1}{r -1} - d\frac{(r^{n-1} -1)}{r -1}
 % \hline
\end{eqnarray}
Adding all the above equations
\begin{eqnarray*}
 a + (a+d)r &+& (a+2d)r^2 + (a+3d)r^3  + \cdots+ [a+(n-1)d]r^{n-1}\\
 &=&[a+(n-1)d]\frac{r^n -1}{r-1} - \frac{d}{r-1}[r+r^2+r^3+ \cdots + r^{n-1} - (n-1)]\\
 &=&[a+(n-1)d]\frac{r^n -1}{r-1} - \frac{d}{r-1} \bigg[ \frac{r^n -1}{r-1} - n \bigg].
\end{eqnarray*}
\end{proof}
The adjacency matrix for the cyclic graph $C_n$ can be expressed as a circulant matrix, denoted as $A = circ(0, 1, 0, 0, \ldots, 0, 1)$. Then by using Lemma \ref{adjc}, the eigenvectors and eigenvalues of $A$ are 
${V}^{(j)}=\displaystyle \begin{bmatrix}
    {1}\\
    {\rho_{j}}\\
    {\rho_{j}^{2}}\\
    \vdots\\
    {\rho_{j}^{n-1}}\\
\end{bmatrix}$,
~$\lambda_{j}=\rho_{j}+\rho_{j}^{n-1}=\rho_{j}+\overline{\rho_{j}}= 2\cos\left(\frac{2\pi}{n}j\right);$ for $j=0, 1, 2, \cdots, n-1.$

 In \cite{grao}, the distance eigenvalues of the Cycle graph ${C}_{n}$ are already discussed. In this context, we determine the eigenvalues of the linear combination formed by combining the adjacency matrix and distance matrix of the cyclic graph ${C}_{n}$. Here we use the notation $\mathcal{\Re}[z]=$ real part of $z$.
\begin{lemma}\label{lam2}
    Consider matrices $A$ and $D$ as the adjacency matrix and distance matrix of the Cycle graph ${C}_{n}$, respectively, with identical vertex ordering. Then for any two scalars $s$ and $t$, the eigenvalues of $sA+tD$ are
\vspace{-.32cm}
\begin{center}
\begin{table}[!ht]
\begin{tabular}{|cc|cc|cc|}
    \hline
   $n$ & \multicolumn{1}{|c|}{ $j=0$}&\multicolumn{1}{|c|}{ $j~ even$} & \multicolumn{1}{|c|}{ $j~ odd$}\\
    \hline
    $even$ & \multicolumn{1}{|c|}{$2s+\frac{n^2}{4}t$ }&\multicolumn{1}{|c|}{$2s\cos\left(\frac{2\pi}{n}j\right)$ } & \multicolumn{1}{|c|}{$2s\cos\left(\frac{2\pi}{n}j\right)-t \mathrm{cosec}^{2}\left(\frac{\pi}{n}j\right)$ } \\
   \hline
   $odd$ & \multicolumn{1}{|c|}{$2s+\frac{n^2-1}{4}t$ }&\multicolumn{1}{|c|}{$2s\cos\left(\frac{2\pi}{n}j\right) -\frac{t}{4}\sec^2\left(\frac{\pi}{2n}j\right)$ } & \multicolumn{1}{|c|}{$2s\cos\left(\frac{2\pi}{n}j\right) -\frac{t}{4}\mathrm{cosec}^{2}\left(\frac{\pi}{2n}j\right)$} \\
\hline
\end{tabular}
\end{table}
\end{center}
\end{lemma}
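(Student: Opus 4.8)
The plan is to use that, with a common vertex ordering, both $A$ and $D=D(\mathcal{C}_n)$ are circulant, so $sA+tD$ is circulant and its spectrum follows at once from Lemma \ref{adjc} together with Lemma 1.2. On a cycle the distance from vertex $0$ to vertex $k$ is $\min\{k,n-k\}$, so $D=circ(d_0,d_1,\dots,d_{n-1})$ with $d_k=\min\{k,n-k\}$, while $A=circ(0,1,0,\dots,0,1)$ has the eigenvalues $\lambda_j=2\cos\!\left(\frac{2\pi}{n}j\right)$ already recorded above. Hence the eigenvalues of $sA+tD$ are $s\lambda_j+t\delta_j$, where $\delta_j=\sum_{k=0}^{n-1}d_k\rho_j^k$ is the $j$-th distance eigenvalue, and the entire problem reduces to putting $\delta_j$ in closed form.

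First I would simplify $\delta_j$ using the symmetry $d_k=d_{n-k}$. Substituting $k\mapsto n-k$ in the upper half of the sum and using $\rho_j^{\,n-k}=\overline{\rho_j^{\,k}}$ folds $\delta_j$ into a single real cosine sum, $\delta_j=2\sum_k k\cos\!\left(\frac{2\pi}{n}jk\right)$, together with one extra middle term $m\rho_j^{\,m}$ in the even case $n=2m$ (there is no middle term when $n$ is odd). Each such cosine sum is the real part of an arithmetic--geometric series $\sum_k k\,\rho_j^{\,k}$, which is exactly the object evaluated by Identity \ref{ide} (common difference $d=1$, ratio $r=\rho_j$); applying it writes $\delta_j$ as a rational expression in $\rho_j$ and $\rho_j^{\,n}$.

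Next I would specialize. For $j=0$ one has $\rho_0=1$, so $\delta_0$ is simply the transmission $\sum_k\min\{k,n-k\}$, which sums to $\frac{n^2}{4}$ for even $n$ and $\frac{n^2-1}{4}$ for odd $n$, giving the $j=0$ column. For $j\neq0$ I would use $\rho_j^{\,n}=1$, so that every geometric tail of the form $\frac{\rho_j^{\,n}-1}{\rho_j-1}$ vanishes and only a compact remainder survives; the half-angle identities $1-\cos\theta=2\sin^2(\theta/2)$ and $1+\cos\theta=2\cos^2(\theta/2)$ then convert this remainder into the stated $\csc^2$ and $\sec^2$ forms. The parity split in $j$ enters through $\rho_j^{\,m}=(-1)^j$ when $n=2m$ (and through the analogous evaluation of $\rho_j^{\,m}$ and $\rho_j^{\,m+1}$ when $n=2m+1$), which is precisely what makes the even-$j$ and odd-$j$ entries differ.

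The main obstacle is the bookkeeping of this trigonometric reduction: one must separately handle $n$ even versus odd (the distance sequence has different middle behaviour), then within each case split by the parity of $j$, and finally collapse a messy ratio of roots of unity to a single clean $\csc^2$ or $\sec^2$. A slicker route that avoids the arithmetic--geometric sum is to observe that the cyclic second difference $2d_k-d_{k-1}-d_{k+1}$ is supported on only two or three indices; equivalently $(2I-A)D$ is a very sparse circulant whose eigenvalues $4\sin^2\!\left(\frac{\pi}{n}j\right)\delta_j$ can be read off directly, so dividing recovers $\delta_j$ immediately. I would use this second route to independently confirm the final table.
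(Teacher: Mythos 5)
Your primary route is essentially the paper's own proof. The paper likewise uses that $A$ and $D$ are circulants sharing the eigenvectors $V^{(j)}$, folds the distance symbol via $d_k=d_{n-k}$ into $2\,\Re\big[\sum_k k\rho_j^k\big]$ (plus the middle term $\frac{n}{2}\rho_j^{n/2}=m(-1)^j$ when $n=2m$), evaluates the resulting arithmetic--geometric sum by Identity \ref{ide}, and collapses the remainder with half-angle identities, splitting by the parities of $n$ and $j$ exactly as you describe; the $j=0$ entries come from the transmission sums $\frac{n^2}{4}$ and $\frac{n^2-1}{4}$ just as in your plan. What you add that the paper does not have is the second-difference shortcut, and it checks out: since $2-\lambda_j=4\sin^2\left(\frac{\pi j}{n}\right)$, the circulant $(2I-A)D$ has eigenvalues $4\sin^2\left(\frac{\pi j}{n}\right)\delta_j$, while its generating sequence $2d_k-d_{k-1}-d_{k+1}$ (cyclically) is supported on $\{0,\frac{n}{2}\}$ with values $(-2,2)$ for even $n$, and on $\{0,\frac{n-1}{2},\frac{n+1}{2}\}$ with values $(-2,1,1)$ for odd $n$; hence $4\sin^2\left(\frac{\pi j}{n}\right)\delta_j$ equals $-2+2(-1)^j$ (even $n$) or $-2+2(-1)^j\cos\left(\frac{\pi j}{n}\right)$ (odd $n$), and dividing recovers every $j\neq 0$ entry of the table in two lines, with none of the arithmetic--geometric bookkeeping that occupies most of the paper's computation. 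The only caveat in that slicker route is that the division degenerates at $j=0$, where $4\sin^2\left(\frac{\pi j}{n}\right)=0$, so $\delta_0$ must be (and in your plan is) computed directly as the transmission; with that handled, both of your routes are sound, and the second is genuinely cleaner than the published argument.
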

\begin{proof}
  Case I: For $n$ is odd, the distance matrix of ${C}_n$ is $D=circ \left(0, 1, 2, \cdots, \frac{n-1}{2}, \frac{n-1}{2}, \cdots, 2, 1\right).$\\
The eigenvalues of $D$ corresponding the eigenvector ${V}^{(j)}$, mentioned earlier are 
\begin{eqnarray*}
\mu_{j}&=& 0 + \rho_{j} + 2\rho_{j}^2 + \cdots +\left(\frac{n-1}{2}\right)\rho_{j}^{\frac{n-1}{2}} + \left(\frac{n-1}{2}\right)\rho_{j}^{\frac{n+1}{2}} + \cdots + 2\rho_{j}^{n-2} + \rho_{j}^{n-1};\\
 &&~~~~~~~~~~~~~~~~~~~~~~~~~~~~~~~~~~~~~~~~~~~~~~~~~~~~~~~~~~~~~~~~~~~~ \mbox{for} ~~j=0, 1, 2, \cdots, (n-1).\\
\mbox{or,}~~ \mu_{j}&=& \left(\rho_{j} + \rho_{j}^{n-1}\right) + 2\left(\rho_{j}^2 + \rho_{j}^{n-2}\right)+ \cdots +\left(\frac{n-1}{2}\right)\left(\rho_{j}^{\frac{n-1}{2}} + \rho_{j}^{\frac{n+1}{2}} \right)\\
&=&\left(\rho_{j} + \overline{{\rho_{j}}}\right) + 2\left(\rho_{j}^2 + 
 \overline{{\rho_{j}^{2}}}\right)+ \cdots +\left(\frac{n-1}{2}\right)\left(\rho_{j}^{\frac{n-1}{2}} + \overline{{\rho_{j}^{\frac{n-1}{2}}}} \right)\\
&=& 2 \mathcal{\Re} \left[\rho_{j} + 2\rho_{j}^2 + \cdots +\left(\frac{n-1}{2}\right)\rho_{j}^{\frac{n-1}{2}}\right]\\
\mbox{So,}~~ \mu_{0}&=& 2\left[1+2+ \cdots +\frac{n-1}{2}\right]= \frac{n^2 -1}{4}
\end{eqnarray*}
and using the Identity \ref{ide}, we get
\begin{eqnarray*}
\mu_{j}&=& 2\mathcal{\Re} \left[ \rho_{j} \left\{ \left(\frac{n-1}{2}\right)\frac{(\rho_{j}^{\frac{n-1}{2}}-1)}{\rho_{j}-1}- \frac{1}{\rho_{j}-1}\left(\frac{\rho_{j}^{\frac{n-1}{2}}-1}{\rho_{j}-1} - \frac{n-1}{2}\right)\right\}\right];~ \mbox{for}~ j=1,2, \cdots, (n-1).\\
&=& 2\Re \left[ \left(\frac{n-1}{2}\right) \rho_{j}^{\frac{n+1}{4}} \frac{(\rho_{j}^{\frac{n-1}{4}}-\rho_{j}^{-\frac{n-1}{4}})}{(\rho_{j}^{\frac{1}{2}}-\rho_{j}^{-\frac{1}{2}})} - \rho_{j}^{\frac{n-1}{4}} \frac{(\rho_{j}^{\frac{n-1}{4}}-\rho_{j}^{-\frac{n-1}{4}})}{(\rho_{j}^{\frac{1}{2}}-\rho_{j}^{-\frac{1}{2}})^2} + \left(\frac{n-1}{2}\right)\frac{\rho_{j}^{\frac{1}{2}}}{(\rho_{j}^{\frac{1}{2}}-\rho_{j}^{-\frac{1}{2}})} \right]\\
&=& 2\Re \left[ \left(\frac{n-1}{2}\right) \rho_{j}^{\frac{n+1}{4}} \frac{\sin((\frac{n-1}{4})\frac{2\pi}{n}j)}{\sin(\frac{\pi}{n}j)} - \rho_{j}^{\frac{n-1}{4}} \frac{\sin((\frac{n-1}{4})\frac{2\pi}{n}j)}{2i\sin^2(\frac{\pi}{n}j)} + \left(\frac{n-1}{2}\right)\frac{\rho_{j}^{\frac{1}{2}}}{2i \sin(\frac{\pi}{n}j)} \right] \\
&=& 2 \left[ \left(\frac{n-1}{2}\right) \frac{\cos((\frac{n-1}{4})\frac{2\pi}{n}j) \sin((\frac{n-1}{4})\frac{2\pi}{n}j)}{\sin(\frac{\pi}{n}j)} - \frac{\sin^2((\frac{n-1}{4})\frac{2\pi}{n}j)}{2\sin^2(\frac{\pi}{n}j)} + \left(\frac{n-1}{2}\right)\frac{\sin(\frac{\pi}{n}j)}{2 \sin(\frac{\pi}{n}j)} \right]\\
&=&  \left(\frac{n-1}{2}\right) \frac{\sin(\pi j) - \sin(\frac{\pi}{n}j)}{\sin(\frac{\pi}{n}j)} - \frac{\sin^2(\frac{\pi}{2}j -\frac{\pi}{2n}j)}{\sin^2(\frac{\pi}{n}j)} + \left(\frac{n-1}{2}\right)\\
&=&-\left(\frac{n-1}{2}\right)  - \frac{\sin^2(\frac{\pi}{2}j -\frac{\pi}{2n}j)}{\sin^2(\frac{\pi}{n}j)} + \left(\frac{n-1}{2}\right)\\
&=&  - \frac{\sin^2(\frac{\pi}{2}j -\frac{\pi}{2n}j)}{\sin^2(\frac{\pi}{n}j)}\\
  &=&\left\{ \begin{array}{rr}
         -\frac{\sin^2\left(\frac{\pi}{2n}j\right)}{\sin^2\left(\frac{\pi}{n}j\right)};~&\mbox{ for $j$ is even}\\
         -\frac{\cos^2\left(\frac{\pi}{2n}j\right)}{\sin^2\left(\frac{\pi}{n}j\right)};~&\mbox{ for $j$ is odd}
       \end{array} \right.\\
&=&\left\{ \begin{array}{rr}
         -\frac{1}{4}\sec^2\left(\frac{\pi}{2n}j\right); ~&\mbox{ for $j$ is even}\\
 -\frac{1}{4} \mathrm{cosec}^{2}\left(\frac{\pi}{2n}j\right); ~&\mbox{ for $j$ is odd}
\end{array} \right.
\end{eqnarray*}
Case II: If $n$ is even, the distance matrix of $C_n$ is $D=circ\left(0, 1, 2, \cdots, \frac{n-2}{2}, \frac{n}{2}, \frac{n-2}{2}, \cdots, 2, 1\right).$\\
The eigenvalues of $D$ corresponding to the eigenvector ${V}^{(j)}$, mentioned earlier, are 
\begin{eqnarray*}
    \mu_{j}&=& 0 + \rho_{j} + 2\rho_{j}^2 + \cdots +\left(\frac{n-2}{2}\right)\rho_{j}^{\frac{n}{2}-1} + \frac{n}{2}\rho_{j}^{\frac{n}{2}}+ \left(\frac{n-2}{2}\right)\rho_{j}^{\frac{n}{2}+1} + \cdots + 2\rho_{j}^{n-2} + \rho_{j}^{n-1};\\ &&~~~~~~~~~~~~~~~~~~~~~~~~~~~~~~~~~~~~~~~~~~~~~~~~~~~~~~~~~~~~~~~~~~~~~~~~~~~~~~\mbox{for} ~ j=0, 1, 2, \cdots, (n-1).\\
\mbox{or,}~~ \mu_{j}&=& \left(\rho_{j} + \rho_{j}^{n-1}\right) + 2\left(\rho_{j}^2 + \rho_{j}^{n-2}\right) + \cdots +\left(\frac{n-2}{2}\right)\left(\rho_{j}^{\frac{n}{2}-1} + \rho_{j}^{\frac{n}{2}+1} \right) + \frac{n}{2}\rho_{j}^{\frac{n}{2}}\\
&=&\left(\rho_{j} + \overline{{\rho_{j}}}\right) + 2\left(\rho_{j}^2 + \overline{{\rho_{j}^{2}}}\right)+ \cdots +\left(\frac{n-2}{2}\right)\left(\rho_{j}^{\frac{n}{2}-1} + \overline{{\rho_{j}^{\frac{n}{2}-1}}} \right) + \frac{n}{2}\rho_{j}^{\frac{n}{2}}\\
 &=&2 \Re \left[\rho_{j} + 2\rho_{j}^2 + \cdots +\left(\frac{n-2}{2}\right)\rho_{j}^{\frac{n}{2}-1} \right] +\frac{n}{2}\cos(\pi j)\\
&=&2\Re \left[\rho_{j} \left(1 + 2\rho_{j} + 3 \rho_{j}^2 + \cdots +\left(\frac{n-2}{2}\right)\rho_{j}^{\frac{n}{2}-2}\right) \right] +\frac{n}{2}\cos(\pi j)\\
\mbox{So,}~~~ \mu_{0}&=& 2\left[1+2+ \cdots +\frac{n-2}{2}\right] + \frac{n}{2}= \frac{n^2}{4}
\end{eqnarray*}
and using the Identity \ref{ide}, we get
\begin{eqnarray*}
    \mu_{j}&=& 2\Re \left[ \rho_{j} \left\{ \left(\frac{n-2}{2}\right)\frac{(\rho_{j}^{\frac{n-2}{2}}-1)}{\rho_{j}-1}- \frac{1}{\rho_{j}-1}\left(\frac{\rho_{j}^{\frac{n-2}{2}}-1}{\rho_{j}-1} - \frac{n-2}{2}\right)\right\}\right] + \frac{n}{2}\cos(\pi j) ; \\
    &&~~~~~~~~~~~~~~~~~~~~~~~~~~~~~~~~~~~~~~~~~~~~~~~~~~~~~~~~~~~~~~~~~~~~~~~~~~~~~~~~~~~\mbox{for} ~~j=1,2, \cdots, (n-1).\\
&=&2\Re \left[ \left(\frac{n-2}{2}\right) \rho_{j}^{\frac{n}{4}} \frac{(\rho_{j}^{\frac{n-2}{4}}-\rho_{j}^{-\frac{n-2}{4}})}{(\rho_{j}^{\frac{1}{2}}-\rho_{j}^{-\frac{1}{2}})} - \rho_{j}^{\frac{n-2}{4}} \frac{(\rho_{j}^{\frac{n-2}{4}}-\rho_{j}^{-\frac{n-2}{4}})}{(\rho_{j}^{\frac{1}{2}}-\rho_{j}^{-\frac{1}{2}})^2} + \left(\frac{n-2}{2}\right)\frac{\rho_{j}^{\frac{1}{2}}}{(\rho_{j}^{\frac{1}{2}}-\rho_{j}^{-\frac{1}{2}})} \right] + \frac{n}{2}\cos(\pi j)\\
&=& 2\Re \left[ \left(\frac{n-2}{2}\right) \rho_{j}^{\frac{n}{4}} \frac{\sin((\frac{n-2}{4})\frac{2\pi}{n}j)}{\sin(\frac{\pi}{n}j)} - \rho_{j}^{\frac{n-2}{4}} \frac{\sin((\frac{n-2}{4})\frac{2\pi}{n}j)}{2i\sin^2(\frac{\pi}{n}j)} + \left(\frac{n-2}{2}\right)\frac{\rho_{j}^{\frac{1}{2}}}{2i \sin(\frac{\pi}{n}j)} \right] + \frac{n}{2}\cos(\pi j) \\
&=& 2 \left[ \left(\frac{n-2}{2}\right) \frac{\cos(\frac{\pi}{2}j) \sin((\frac{n-2}{4})\frac{2\pi}{n}j)}{\sin(\frac{\pi}{n}j)} - \frac{\sin^2((\frac{n-2}{4})\frac{2\pi}{n}j)}{2\sin^2(\frac{\pi}{n}j)} + \left(\frac{n-2}{2}\right)\frac{\sin(\frac{\pi}{n}j)}{2 \sin(\frac{\pi}{n}j)} \right] + \frac{n}{2}\cos(\pi j)\\
&=& \left[ \left(\frac{n-2}{2}\right)\frac{\sin(\pi j - \frac{\pi}{n}j) - \sin(\frac{\pi}{n}j)}{\sin(\frac{\pi}{n}j)}- \frac{\sin^2(\frac{\pi}{2}j -\frac{\pi}{n}j)}{2\sin^2(\frac{\pi}{n}j)} + \left(\frac{n-2}{2}\right) \right] + \frac{n}{2} \cos(\pi j)   \\
&=&\left\{ \begin{array}{rr}
        -2\left(\frac{n-2}{2}\right) -1 +\frac{n-2}{2} + \frac{n}{2};~~~~&\mbox{ for $j$ is even}\\
         -\frac{\cos^2\left(\frac{\pi}{n}j\right)}{\sin^2\left(\frac{\pi}{n}j\right)} +\frac{n-2}{2} - \frac{n}{2};~~~&\mbox{ for $j$ is odd}
       \end{array} \right.\\
&=&\left\{ \begin{array}{rr}
        0;~~~&\mbox{ for $j$ is even}\\
         -\mathrm{cosec}^{2}\left(\frac{\pi}{n}j\right);~~~&\mbox{ for $j$ is odd}
       \end{array} \right.
\end{eqnarray*}
Thus, we have eigenvalues of the adjacency matrix $A$ are $\lambda_{j}= 2\cos\left(\frac{2\pi}{n}j\right)$ and eigenvalues of the distance matrix $D$ are $\mu_{j}$ where as in both the cases corresponding eigenvectors are ${V}^{(j)}=\displaystyle \begin{bmatrix}
    {1}\\
    {\rho_{j}}\\
    {\rho_{j}^{2}}\\
    \vdots\\
    {\rho_{j}^{n-1}}
\end{bmatrix},$ for $j=0, 1, 2, \cdots, n-1$.
Hence, one can write the spectrum of a linear combination of the adjacency matrix and distance matrix of Cycle graph ${C}_{n}$ as mentioned in the statement of this lemma.
\end{proof}

%********************************************************************

\section{Distance matrix as a polynomial of adjacency matrix}
In this section, we discuss the distance matrix of a distance regular graph with diameter $d$, which can be written as a polynomial of the adjacency matrix of degree $d$. However, determining such a polynomial is not an easy task. Nonetheless, we have identified such polynomials for two distinct families of distance regular graphs—the Johnson graph and the Hamming graph. While we have found these polynomials in terms of both adjacency spectra and distance spectra, we can also convert them into the intersection numbers of the graph.

\begin{theorem}
    \label{johnpoly}
Let J(n,m) be the Johnson graph $(n \ge 2m)$ with intersection numbers $c_{i}$ and $b_{i}$. Let $A$ and $D$ be the adjacency matrix and distance matrix. Then the polynomial $p(x)=s\left\{\displaystyle \prod_{i=1}^m\frac{(x-b_{i}+i)}{(b_{0}-b_{i}+i)}- \frac{1}{n-1} \displaystyle \prod_{\substack{i=0\\ i\neq1}}^m \frac{(x-b_{i}+i)}{(b_{1}-b_{i}+i-1)}\right\}$ is the required polynomial satisfy $D=p(A)$, where $s=\displaystyle \sum_{j=0}^{m} jk_{j}$ and $k_{j}=\binom{m}{j}\binom{n-m}{j}$ for $j=0,1, \cdots, m.$ 
\end{theorem}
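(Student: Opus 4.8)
The plan is to use that $J(n,m)$ is distance regular of diameter $m$, so that its distance matrix is a polynomial in $A$, and then to recover that polynomial by Lagrange interpolation exactly as set up around \eqref{vaninv}. In any distance-regular graph the distance-$i$ matrices $A_0=I,A_1=A,\dots,A_m$ obey the three-term recurrence $AA_i=b_{i-1}A_{i-1}+a_iA_i+c_{i+1}A_{i+1}$, so each $A_i$ is a polynomial of degree $i$ in $A$; consequently $D=\sum_{i=0}^{m} iA_i$ is a polynomial in $A$ of degree at most $m$. Since $A$ has exactly the $m+1$ distinct eigenvalues $\lambda_i=b_i-i$ for $i=0,\dots,m$, there is a unique such polynomial $p$ of degree $\le m$, and it is determined by the $m+1$ values $p(\lambda_i)$.

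First I would observe that because $D=p(A)$, the matrices $A$ and $D$ are simultaneously diagonalizable, so on the $A$-eigenspace $E_{\lambda_i}$ the matrix $D$ acts as the scalar $p(\lambda_i)$; thus $p(\lambda_i)$ is precisely the distance eigenvalue carried by $E_{\lambda_i}$, and the multiplicity of each distance eigenvalue is the sum of the $f_i=\binom{n}{i}-\binom{n}{i-1}$ over the eigenspaces on which $p$ takes that value. Next I would import the distance spectrum of $J(n,m)$ from \cite{atik}: the three distinct values $s$, $-\tfrac{s}{n-1}$, $0$ with multiplicities $1$, $n-1$, $\binom{n}{m}-n$, where $s=\sum_{j=0}^{m} jk_j$ is the common transmission, so that the all-ones vector is a $D$-eigenvector for $s$.

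The remaining bookkeeping matches each distance eigenvalue to the right adjacency eigenspace. The value $s$ is the largest (Perron) eigenvalue and belongs to the all-ones vector, which spans $E_{\lambda_0}$ (here $f_0=1$), so $p(\lambda_0)=s$. The value $-\tfrac{s}{n-1}$ has multiplicity $n-1=f_1$, and since $E_{\lambda_0}$ is already committed to $s$, its eigenspace is a union of $E_{\lambda_1},\dots,E_{\lambda_m}$ of total dimension $n-1$; comparing this with $f_1=n-1$ and with the values $f_i$ for $i\ge 2$ forces the union to be exactly $E_{\lambda_1}$, whence $p(\lambda_1)=-\tfrac{s}{n-1}$ and $p(\lambda_i)=0$ for $2\le i\le m$. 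With these $m+1$ values fixed, the final step is the computation $p(x)=\sum_{i=0}^{m}p(\lambda_i)L_i(x)$ with $L_i(x)=\prod_{j\ne i}\frac{x-\lambda_j}{\lambda_i-\lambda_j}$; only the $i=0$ and $i=1$ terms survive, giving $p(x)=s\,L_0(x)-\tfrac{s}{n-1}L_1(x)$. Substituting $\lambda_i=b_i-i$ turns $x-\lambda_i$ into $x-b_i+i$, turns $\lambda_0-\lambda_i$ into $b_0-b_i+i$, and turns $\lambda_1-\lambda_i$ into $b_1-b_i+i-1$, recasting $L_0$ and $L_1$ as exactly the two products in the statement.

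I expect the genuine obstacle to be the eigenspace matching rather than the algebra: Lagrange interpolation only reconstructs $p$ once one knows which distance eigenvalue sits on which adjacency eigenspace, and pinning $-\tfrac{s}{n-1}$ to $E_{\lambda_1}$ (and $0$ to all higher eigenspaces) rests on the simultaneous diagonalizability of $A$ and $D$ together with the multiplicity comparison above. Simplifying the two Lagrange factors into the stated products, and recognizing $s=\sum_j jk_j$ as the transmission, are then routine.
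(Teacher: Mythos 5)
Your overall route is exactly the paper's: use distance regularity to get $D=p(A)$ with $\deg p\le m$, pin down the values $p(\lambda_i)$ by matching distance eigenvalues to adjacency eigenspaces, and recover $p$ by Lagrange interpolation through the Vandermonde/Lagrange setup of equation \eqref{vaninv}; the final substitution $\lambda_i=b_i-i$ is identical. The existence of $p$, the identification $p(\lambda_0)=s$ via the all-ones vector, and the algebra at the end are all fine.

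The one genuine gap is the step you yourself flag as the crux and then wave through: the assertion that ``comparing $f_1=n-1$ with the values $f_i$ for $i\ge 2$ forces the union to be exactly $E_{\lambda_1}$.'' For that comparison to force anything you must actually prove that $f_i=\binom{n}{i}-\binom{n}{i-1}>n-1$ for every $2\le i\le m$ (under $n\ge 2m$); otherwise some $E_{\lambda_i}$ with $i\ge 2$ could, a priori, account for part or all of the $(n-1)$-dimensional eigenspace of $-\tfrac{s}{n-1}$, and the matching would be ambiguous. This inequality is where essentially all of the paper's work goes: it is established by a chain of binomial estimates valid for $i\ge 4$ and $n\ge 8$, with the cases $i=2$ and $i=3$ handled separately, and the paper even records in a remark that the technique does not cover $n<8$ and those small cases must be checked directly. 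As written, your proposal contains no argument for the inequality, so the eigenspace matching --- and hence the values $p(\lambda_1)=-\tfrac{s}{n-1}$ and $p(\lambda_i)=0$ for $i\ge 2$ on which the whole interpolation rests --- is not yet justified. Supplying that multiplicity bound would complete the proof and would reproduce the paper's argument in full.
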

\begin{proof}
 We know that the intersection numbers of the Johnson graph $J(n,m)$ are $c_{i}=i^2, b_{i}=(m-i)(n-m-i).$ The distinct eigenvalues of $A$ are $\lambda_{i}=b_{i}-i$ with multiplicity $\binom{n}{i}-\binom{n}{i-1} $ for $i=0,1, \cdots, m.$ The distinct eigenvalues of $D$ are $\mu_{0}=s, \mu_{1}=-\frac{s}{n-1}, \mu_{2}=0$ with multiplicity $1, (n-1), \binom{n}{m}-n$ respectively. For every distance regular graph with diameter $r,$ there exists a polynomial $p(x)$ of degree $r$ such that $p(A)=D.$ So, $p(\lambda_{i}) \in \{ \mu_{0}, \mu_{1},\mu_{2} \}.$ \textit{Claim:} $p(\lambda_{0})=\mu_{0}, p(\lambda_{1})=\mu_{1}$ and $p(\lambda_{i})=\mu_{2}=0$ for $i=2,3, \cdots, m.$ It is obvious that $p(\lambda_{0})=\mu_{0},$ because $p(\lambda_{0})$ is an eigenvalue of the matrix $p(A)$ and $\mu_{0}$ is the eigenvalue of the matrix $D$ corresponding the same eigenvector $j_{\binom{n}{m}}.$ So, mainly we have to show $p(\lambda_{1})=\mu_{1}.$ We know, if $\lambda$ has multiplicity $\alpha$ then $p(\lambda)$ has multiplicity at least $\alpha.$ To prove $p(\lambda_{1})=\mu_{1},$ we prove that $p(\lambda_{i})\neq \mu_{1}$ for $i=2, 3, \cdots, m.$
 For this we have to show that all $\lambda_{i},$ for $i=2, 3, \cdots, m;$ have more multiplicity than $\lambda_{1}.$
 
 Now, \begin{eqnarray*}
     \binom{n}{i}-\binom{n}{i-1}&=&\frac{n!}{(n-i)!i!}-\frac{n!}{(n-i+1)!(i-1)!}\\
     &=& \frac{n!}{(n-i+1)!(i-1)!}\left[\frac{n-i+1}{i}-1\right]\\
     &=& \binom{n}{i-1}\left[\frac{n+1}{i}-2\right]\\
     &>& \binom{n}{i-1} \frac{2}{n} ~~~~~~~\left[\because \left(\frac{n+1}{i}-2\right)>\frac{2}{n} ~\mbox{for}~ i=2, 3, \cdots, m; ~\mbox{and}~  2m \leq n\right]\\
     &=& \frac{2}{n} \times \frac{n}{i-1}\binom{n-1}{i-2} ~~~~\left[\because r\binom{n}{r}=n\binom{n-1}{r-1}\right]\\
     &=&\frac{2}{i-1}\binom{n-1}{i-2}\\
     &>&\frac{2}{i-1}\binom{n-1}{2}   ~~~~~~~~\left[\mbox{for}~ 4\leq i \leq \frac{n}{2}\right]\\
     &=& \frac{(n-1)(n-2)}{i-1} 
     > (n-1) ~~~~~~\left[\because \frac{n-2}{i-1}>1 ~ \mbox{for}~ n\geq 8,~ i\geq 4 \right].
 \end{eqnarray*}
 For $i=2, n\geq 6; \binom{n}{2}-\binom{n}{1} = \frac{n(n-3)}{2}>n>(n-1).$\\
 For $i=3, n\geq 7; \binom{n}{3}-\binom{n}{2} = \frac{n(n-1)(n-5)}{6}>(n-1).$\\
 So, for $i=2, 3, \cdots, m$ and $n\geq 8;$ each $p(\lambda_{i})$ has multiplicity more than $(n-1).$ This implies $p(\lambda_{i})\neq \mu_{1}$ for $i=2, 3, \cdots, m.$ Hence $p(\lambda_{0})=\mu_{0}, p(\lambda_{1})=\mu_{1}$ and $p(\lambda_{i})=\mu_{2}=0$ for $i=2,3, \cdots, m.$
 Let $p(x)=a_{0} + a_{1}x + a_{2}x^{2} + \cdots + a_{m}x^{m}$ be polynomial of degree $m$ which satisfies $p(\lambda_{0})=\mu_{0}, p(\lambda_{1})=\mu_{1}, p(\lambda_{i})=\mu_{2}=0$ for $i=2, 3, \cdots, m.$ This problem can be viewed in matrix form as follows:
 \begin{eqnarray*}\label{}\displaystyle {\begin{bmatrix}  {1} & {\lambda}_{0} & {\lambda}_{0}^{2} &\cdots &{\lambda}_{0}^{m} \\
 {1} & {\lambda}_{1} & {\lambda}_{1}^{2} &\cdots & {\lambda}_{1}^{m} \\
 {1} & {\lambda}_{2} & {\lambda}_{2}^{2} &\cdots & {\lambda}_{2}^{m} \\
\vdots &\vdots &\vdots &\ddots &\vdots   \\
{1} & {\lambda}_{m}& {\lambda}_{m}^{2}& \cdots & {\lambda}_{m}^{m}
 \end{bmatrix}}
 \displaystyle {\begin{bmatrix}   {a}_{0}  \\
  {a}_{1}  \\
   {a}_{2} \\
\vdots    \\
 {a}_{m}
 \end{bmatrix}}=\displaystyle {\begin{bmatrix}   p({\lambda}_{0})  \\
  p({\lambda}_{1})  \\
   p({\lambda}_{2}) \\
\vdots    \\
 p({\lambda}_{m})
 \end{bmatrix}}=\displaystyle {\begin{bmatrix}   {\mu}_{0}  \\
  {\mu}_{1}  \\
   {0} \\
\vdots    \\
 {0}
 \end{bmatrix}}
 \end{eqnarray*}
 Using equation \ref{vaninv} we get,
 \begin{eqnarray*}
 \displaystyle {\begin{bmatrix}   {a}_{0}  \\
  {a}_{1}  \\
   {a}_{2} \\
\vdots    \\
 {a}_{m}
 \end{bmatrix}}=\displaystyle {\begin{bmatrix}  {L}_{00} & {L}_{01} &\cdots &{L}_{0m} \\
  {L}_{10} & {L}_{11} &\cdots & {L}_{1m} \\
  {L}_{20} & {L}_{21} &\cdots & {L}_{2m} \\
\vdots &\vdots &\ddots &\vdots   \\
 {L}_{m0}& {L}_{m1}& \cdots & {L}_{mm}
 \end{bmatrix}} \displaystyle {\begin{bmatrix}   {\mu}_{0}  \\
  {\mu}_{1}  \\
   {0} \\
\vdots    \\
 {0}
 \end{bmatrix}}=\displaystyle {\begin{bmatrix}   {\mu}_{0}{L}_{00}+{\mu}_{1} {L}_{01} \\
 {\mu}_{0}{L}_{10} +{\mu}_{1}{L}_{11}  \\
   {\mu}_{0}{L}_{20} +{\mu}_{1}{L}_{21} \\
\vdots  ~~~~~~~~\vdots  \\
 {\mu}_{0}{L}_{m0} +{\mu}_{1}{L}_{m1}
 \end{bmatrix}}
 \end{eqnarray*}
 where $L_{0j},L_{1j}, \cdots, L_{mj}$ are the coefficients of the Lagrange polynomials  $$L_{j}(x)=L_{0j}+L_{1j}x+ \cdots +L_{mj}x^{m}=\displaystyle \prod_{\substack{0\le i \le m\\ i\neq j}}\frac{(x-\lambda_{i})}{(\lambda_{j}-\lambda_{i})}.$$
 Therefore the required polynomial is,
 \begin{eqnarray*} 
p(x)&=&({\mu}_{0}{L}_{00}+{\mu}_{1} {L}_{01})+ ({\mu}_{0}{L}_{10} +{\mu}_{1}{L}_{11})x + ({\mu}_{0}{L}_{20} +{\mu}_{1}{L}_{21})x^{2}+ \cdots + ({\mu}_{0}{L}_{m0} +{\mu}_{1}{L}_{m1})x^{m} \\
 &=& \mu_{0}({L}_{00}+{L}_{10}x+{L}_{20}x^{2}+\cdots + {L}_{m0}x^{m}) +\mu_{1}({L}_{01}+{L}_{11}x+{L}_{21}x^{2}+\cdots + {L}_{m1}x^{m}) \\ 
 &=& \mu_{0}L_{0}(x) + \mu_{1}L_{1}(x)\\
 &=&\mu_{0}\displaystyle \prod_{i=1}^{m}\frac{(x-\lambda_{i})}{(\lambda_{0}-\lambda_{i})} + \mu_{1}\displaystyle \prod_{\substack{ i =0 \\ i\neq 1}}^{m}\frac{(x-\lambda_{i})}{(\lambda_{1}-\lambda_{i})} \\
 &=& s\displaystyle \prod_{i=1}^{m}\frac{(x-\lambda_{i})}{(\lambda_{0}-\lambda_{i})} -\frac{s}{n-1}\displaystyle \prod_{\substack{ i =0 \\ i\neq 1}}^{m}\frac{(x-\lambda_{i})}{(\lambda_{1}-\lambda_{i})}\\
 &=&s\left\{\displaystyle \prod_{i=1}^m\frac{(x-b_{i}+i)}{(b_{0}-b_{i}+i)}- \frac{1}{n-1} \displaystyle \prod_{\substack{i=0\\ i\neq1}}^m \frac{(x-b_{i}+i)}{(b_{1}-b_{i}+i-1)}\right\}.
  \end{eqnarray*}
  Hence complete the proof.
    \end{proof}
\begin{remark}
    \emph{Although for $n<8,$ the multiplicity of $p(\lambda_{i})$ proof technique is not working. Still it follows $p(\lambda_{0})=\mu_{0},~ p(\lambda_{1})=\mu_{1}$ and $p(\lambda_{i})=\mu_{2}=0$ for $i=2,3.$}
\end{remark}
   
    \begin{theorem}\label{hampoly}
    Let H(d,q) be the Hamming graph with intersection numbers $c_{i}$ and $b_{i}$. Let $A$ and $D$ be the adjacency matrix and distance matrix. Then the polynomial $p(x)=t\left\{\displaystyle \prod_{i=1}^d\frac{(x-b_{0}+qc_{i})}{qc_{i}}- \frac{1}{d(q-1)} \displaystyle \prod_{\substack{i=0\\ i\neq1}}^d \frac{(x-b_{0}+qc_{i})}{q(c_{i}-1)}\right\}$ is the required polynomial satisfy $D=p(A)$, where $t=dq^{d-1}(q-1)$.

\end{theorem}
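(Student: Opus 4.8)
The plan is to follow the same strategy as in the proof of Theorem \ref{johnpoly}, exploiting that $H(d,q)$ is distance regular of diameter $d$, so there exists a polynomial $p$ of degree $d$ with $p(A)=D$. Since $A$ has exactly the $d+1$ distinct eigenvalues $\lambda_{i}=b_{0}-qc_{i}$, each value $p(\lambda_{i})$ is an eigenvalue of $D=p(A)$, and hence $p(\lambda_{i})\in\{\mu_{0},\mu_{1},\mu_{2}\}=\{dq^{d-1}(q-1),-q^{d-1},0\}$. The heart of the argument is to pin down which adjacency eigenvalue is sent to which distance eigenvalue; the claim is that $p(\lambda_{0})=\mu_{0}$, $p(\lambda_{1})=\mu_{1}$, and $p(\lambda_{i})=\mu_{2}=0$ for $i=2,3,\cdots,d$.

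First I would dispose of $p(\lambda_{0})=\mu_{0}$: the valency $\lambda_{0}=b_{0}$ (recall $c_{0}=0$) and the transmission $\mu_{0}$ share the all-ones eigenvector $j_{q^{d}}$, so they correspond. Next comes the key step $p(\lambda_{1})=\mu_{1}$. Because applying $p$ can only increase multiplicities (if $\lambda$ has multiplicity $\alpha$ then $p(\lambda)$ has multiplicity at least $\alpha$), and because the multiplicity of $\lambda_{1}$, namely $\binom{d}{1}(q-1)=d(q-1)$, already equals that of $\mu_{1}$, it suffices to rule out $p(\lambda_{i})=\mu_{1}$ for every $i\ge 2$. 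I would do this by showing that each such $\lambda_{i}$ carries strictly larger multiplicity than $d(q-1)$, that is,
\[
\binom{d}{i}(q-1)^{i}>d(q-1)\qquad (i=2,3,\cdots,d),
\]
which forces $p(\lambda_{i})$ to have multiplicity exceeding that of $\mu_{1}$, hence $p(\lambda_{i})\neq\mu_{1}$; the only remaining option is then $p(\lambda_{i})=0$.

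The main obstacle is exactly this combinatorial inequality. Rewriting it as $\binom{d}{i}(q-1)^{i-1}>d$ and examining the ratio $\frac{d-i}{i+1}(q-1)$ of consecutive multiplicities, I would locate the minimum of the left-hand side over $i\in\{2,3,\cdots,d\}$ and bound it below by $d$. For $q$ and $d$ not too small this goes through, but there are genuine borderline instances that escape the multiplicity comparison: when $q=2$ one has $\binom{d}{d-1}=d$ (and even $\binom{d}{d}=1<d$), tying or undercutting the multiplicity of $\lambda_{1}$, and for $H(2,3)$ both $\lambda_{1}$ and $\lambda_{2}$ have multiplicity $4$. These small cases would be recorded separately in a remark, exactly as was done after Theorem \ref{johnpoly}.

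Once the assignment is established, the computation is routine and parallels the Johnson case. Writing $p(x)=a_{0}+a_{1}x+\cdots+a_{d}x^{d}$ and imposing $p(\lambda_{0})=\mu_{0}$, $p(\lambda_{1})=\mu_{1}$, $p(\lambda_{i})=0$ for $i\ge 2$ gives a square Vandermonde system in the distinct nodes $\lambda_{0},\lambda_{1},\cdots,\lambda_{d}$; inverting it by Lagrange interpolation through \ref{vaninv} yields
\[
p(x)=\mu_{0}\prod_{i=1}^{d}\frac{x-\lambda_{i}}{\lambda_{0}-\lambda_{i}}+\mu_{1}\prod_{\substack{i=0\\ i\neq1}}^{d}\frac{x-\lambda_{i}}{\lambda_{1}-\lambda_{i}}.
\]
Finally I would substitute $\lambda_{i}=b_{0}-qc_{i}$ (so $x-\lambda_{i}=x-b_{0}+qc_{i}$), together with $\lambda_{0}-\lambda_{i}=q(c_{i}-c_{0})=qc_{i}$ and $\lambda_{1}-\lambda_{i}=q(c_{i}-c_{1})=q(c_{i}-1)$ using $c_{0}=0$, $c_{1}=1$, and $\mu_{1}=-q^{d-1}=-t/(d(q-1))$ with $t=\mu_{0}=dq^{d-1}(q-1)$, to recover the stated closed form and complete the proof.
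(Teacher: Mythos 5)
Your proposal follows essentially the same route as the paper: the same eigenvalue assignment $p(\lambda_{0})=\mu_{0}$, $p(\lambda_{1})=\mu_{1}$, $p(\lambda_{i})=0$ justified by comparing the multiplicity $\binom{d}{i}(q-1)^{i}$ against $d(q-1)$, followed by the same Vandermonde/Lagrange interpolation and substitution of $\lambda_{i}=b_{0}-qc_{i}$. If anything you are slightly more careful than the paper, which restricts to $q,d\ge 3$ inside the proof without flagging that the multiplicity comparison genuinely fails for $q=2$ (where $\binom{d}{d-1}=d(q-1)$ and $\binom{d}{d}=1$) and for $H(2,3)$.
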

\begin{proof}
         We know that the intersection numbers of the Hamming graph $H(d,q)$ are $c_{i}=i, b_{i}=(d-i)(q-1).$ The distinct eigenvalues of $A$ are $\lambda_{i}=b_{0}-qc_{i}$ with multiplicity $\binom{d}{i}(q-1)^{i} $ for $i=0,1, \cdots, d.$ The distinct eigenvalues of $D$ are $\mu_{0}=s=dq^{d-1}(q-1), \mu_{1}=-q^{(d-1)}, \mu_{2}=0$ with multiplicity $1, d(q-1), q^{d}-d(q-1)-1$ respectively. For every distance regular graph with diameter $r,$ there exists a polynomial $p(x)$ of degree $r$ such that $p(A)=D.$ So, $p(\lambda_{i}) \in \{ \mu_{0}, \mu_{1},\mu_{2} \}.$ \textit{Claim:} $p(\lambda_{0})=\mu_{0}, p(\lambda_{1})=\mu_{1}$ and $p(\lambda_{i})=\mu_{2}=0$ for $i=2,3, \cdots, d.$ It is obvious that $p(\lambda_{0})=\mu_{0},$ because $p(\lambda_{0})$ is an eigenvalue of the matrix $p(A)$ and $\mu_{0}$ is the eigenvalue of the matrix $D$ corresponding the same eigenvector $j_{q^d}.$ So, mainly we have to show $p(\lambda_{1})=\mu_{1}.$ We know, if $\lambda$ has multiplicity $\alpha$ then $p(\lambda)$ has multiplicity at least $\alpha.$ To prove $p(\lambda_{1})=\mu_{1},$ we prove that $p(\lambda_{i})\neq \mu_{1}$ for $i=2, 3, \cdots, d.$ For this we have to show that all $\lambda_{i},$ for $i=2, 3, \cdots, d;$ have more multiplicity than $\lambda_{1}.$ Basically we have to prove $\binom{d}{i}(q-1)^{i}>d(q-1).$ Now it is obvious that $\binom{d}{i}(q-1)^{i}>\binom{d}{1}(q-1)^{i}>d(q-1)$ for $2 \leq i \leq (d-2).$ For $i=d-1,\binom{d}{d-1}(q-1)^{d-1}=\binom{d}{1}(q-1)^{d-1}>d(q-1).$ For $i=d, \binom{d}{d}(q-1)^{d}=(q-1)^{d}>d(q-1),$ because $(q-1)^{d-1}>d$ for any $q,d\geq 3.$ Hence, for $i=2, 3, \cdots, d$ and $q,d\geq 3;$ each $p(\lambda_{i})$ has multiplicity more than $d(q-1).$ This implies $p(\lambda_{i})\neq \mu_{1}$ for $i=2, 3, \cdots, d.$ Hence $p(\lambda_{0})=\mu_{0}, p(\lambda_{1})=\mu_{1}$ and $p(\lambda_{i})=\mu_{2}=0$ for $i=2,3, \cdots, d.$  Let $p(x)=a_{0} + a_{1}x + a_{2}x^{2} + \cdots + a_{d}x^{d}$ be polynomial of degree $d$ which satisfies $p(\lambda_{0})=\mu_{0}, p(\lambda_{1})=\mu_{1}, p(\lambda_{i})=\mu_{2}=0$ for $i=2, 3, \cdots, d.$ This problem can be viewed in matrix form as follows:
         \begin{eqnarray*}\label{}\displaystyle {\begin{bmatrix}  {1} & {\lambda}_{0} & {\lambda}_{0}^{2} &\cdots &{\lambda}_{0}^{d} \\
 {1} & {\lambda}_{1} & {\lambda}_{1}^{2} &\cdots & {\lambda}_{1}^{d} \\
 {1} & {\lambda}_{2} & {\lambda}_{2}^{2} &\cdots & {\lambda}_{2}^{d} \\
\vdots &\vdots &\vdots &\ddots &\vdots   \\
{1} & {\lambda}_{d}& {\lambda}_{d}^{2}& \cdots & {\lambda}_{d}^{d}
 \end{bmatrix}}
 \displaystyle {\begin{bmatrix}   {a}_{0}  \\
  {a}_{1}  \\
   {a}_{2} \\
\vdots    \\
 {a}_{d}
 \end{bmatrix}}=\displaystyle {\begin{bmatrix}   p({\lambda}_{0})  \\
  p({\lambda}_{1})  \\
   p({\lambda}_{2}) \\
\vdots    \\
 p({\lambda}_{d})
 \end{bmatrix}}=\displaystyle {\begin{bmatrix}   {\mu}_{0}  \\
  {\mu}_{1}  \\
   {0} \\
\vdots    \\
 {0}
 \end{bmatrix}}
 \end{eqnarray*}
 Using equation \ref{vaninv} we get,
 \begin{eqnarray*}
 \displaystyle {\begin{bmatrix}   {a}_{0}  \\
  {a}_{1}  \\
   {a}_{2} \\
\vdots    \\
 {a}_{d}
 \end{bmatrix}}=\displaystyle {\begin{bmatrix}  {L}_{00} & {L}_{01} &\cdots &{L}_{0d} \\
  {L}_{10} & {L}_{11} &\cdots & {L}_{1d} \\
  {L}_{20} & {L}_{21} &\cdots & {L}_{2d} \\
\vdots &\vdots &\ddots &\vdots   \\
 {L}_{d0}& {L}_{d1}& \cdots & {L}_{dd}
 \end{bmatrix}} \displaystyle {\begin{bmatrix}   {\mu}_{0}  \\
  {\mu}_{1}  \\
   {0} \\
\vdots    \\
 {0}
 \end{bmatrix}}=\displaystyle {\begin{bmatrix}   {\mu}_{0}{L}_{00}+{\mu}_{1} {L}_{01} \\
 {\mu}_{0}{L}_{10} +{\mu}_{1}{L}_{11}  \\
   {\mu}_{0}{L}_{20} +{\mu}_{1}{L}_{21} \\
\vdots  ~~~~~~~~\vdots  \\
 {\mu}_{0}{L}_{d0} +{\mu}_{1}{L}_{d1}
 \end{bmatrix}}
 \end{eqnarray*}
 where $L_{0j},L_{1j}, \cdots, L_{dj}$ are the coefficients of the Lagrange polynomials $$L_{j}(x)=L_{0j}+L_{1j}x+ \cdots +L_{dj}x^{d}=\displaystyle \prod_{\substack{0\le i \le d\\ i\neq j}}\frac{(x-\lambda_{i})}{(\lambda_{j}-\lambda_{i})}.$$ Therefore, the required polynomial is,
 \begin{eqnarray*}
  p(x)&=&({\mu}_{0}{L}_{00}+{\mu}_{1} {L}_{01})+ ({\mu}_{0}{L}_{10} +{\mu}_{1}{L}_{11})x + ({\mu}_{0}{L}_{20} +{\mu}_{1}{L}_{21})x^{2}+ \cdots + ({\mu}_{0}{L}_{d0} +{\mu}_{1}{L}_{d1})x^{d} \\
 &=& \mu_{0}({L}_{00}+{L}_{10}x+{L}_{20}x^{2}+\cdots + {L}_{d0}x^{d}) +\mu_{1}({L}_{01}+{L}_{11}x+{L}_{21}x^{2}+\cdots + {L}_{d1}x^{d}) \\ 
 &=& \mu_{0}L_{0}(x) + \mu_{1}L_{1}(x)\\
 &=&\mu_{0}\displaystyle \prod_{i=1}^{d}\frac{(x-\lambda_{i})}{(\lambda_{0}-\lambda_{i})} + \mu_{1}\displaystyle \prod_{\substack{ i =0 \\ i\neq 1}}^{d}\frac{(x-\lambda_{i})}{(\lambda_{1}-\lambda_{i})} \\
 &=&t\displaystyle \prod_{i=1}^{d}\frac{(x-\lambda_{i})}{(\lambda_{0}-\lambda_{i})} -\frac{t}{d(q-1)}\displaystyle \prod_{\substack{ i =0 \\ i\neq 1}}^{d}\frac{(x-\lambda_{i})}{(\lambda_{1}-\lambda_{i})}\\
 &=&t\left\{\displaystyle \prod_{i=1}^d\frac{(x-b_{0}+qc_{i})}{(b_{0}-b_{0}+qc_{i})}- \frac{1}{d(q-1)} \displaystyle \prod_{\substack{i=0\\ i\neq1}}^d \frac{(x-b_{0}+qc_{i})}{(b_{0}-q-b_{0}+qc_{i})}\right\}\\
 &=&t\left\{\displaystyle \prod_{i=1}^d\frac{(x-b_{0}+qc_{i})}{qc_{i}}- \frac{1}{d(q-1)} \displaystyle \prod_{\substack{i=0\\ i\neq1}}^d \frac{(x-b_{0}+qc_{i})}{q(c_{i}-1)}\right\}.
 \end{eqnarray*}
Hence, complete the proof.
 \end{proof}

%***********************************************************************************

%distance related

\section{Full Distance Spectrum of Kronecker Product of Some Graphs}
This section provides all the explicit eigenvalues of some product graphs. Let $\mathcal{G}$ be a simple connected graph and $K_{n}$ be a complete graph. Then the Kronecker product $K_{n} \otimes \mathcal{G} $ is a simple connected graph (by Lemma \ref{connected}). The next two results give us all the distance eigenvalues of $K_{n} \otimes C_{m}$, for cycle graph $C_{m}$ with $m$ vertices.
\begin{theorem} \label{c2m}
Let $K_{n}$ and $C_{n}$ denote the complete and cycle graph with $n$ vertices, respectively. Then the distinct distance eigenvalues of $K_{n} \otimes C_{2m}$ are $2(n+1) + nm^2, 2(n-1) + 4 \cos\left(\frac{2p\pi}{m}\right), 2(n-1) + 4 \cos\left(\frac{(2q-1)\pi}{m}\right) - n \mathrm{cosec}^{2}\left(\frac{(2q-1)\pi}{2m}\right), \left(4\cos\left(\frac{\pi}{m}r\right)-2\right) $ for $p=1, 2, \cdots, (m-1); q=1, 2, \cdots, m;$ and $ r=1, 2, \cdots, (2m-1). $ 
\end{theorem}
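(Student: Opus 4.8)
The plan is to write the distance matrix $D$ of $K_{n}\otimes C_{2m}$ as a short linear combination of Kronecker products of matrices whose spectra are already known, and then read off the eigenvalues by simultaneous diagonalization. Throughout I take $n\ge 3$, so that $K_{n}$ contains a triangle and hence $K_{n}\otimes C_{2m}$ is connected by Lemma \ref{connected} (for $n=2$ both factors are bipartite and the product is disconnected, so the distance matrix is not even defined).

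First I would compute every pairwise distance in the product. The governing principle is that in a Kronecker product there is a walk of length $k$ from $(x,y)$ to $(u,v)$ if and only if there are walks of length $k$ both from $x$ to $u$ in $K_{n}$ and from $y$ to $v$ in $C_{2m}$, and the distance is the least such $k$. In $K_{n}$ the admissible walk lengths are $\{1,2,3,\dots\}$ when $x\neq u$ and $\{0,2,3,4,\dots\}$ (all lengths except $1$) when $x=u$; in the bipartite graph $C_{2m}$ a walk of length $k$ from $y$ to $v$ exists iff $k\ge\delta$ and $k\equiv\delta\pmod 2$, where $\delta=d_{C_{2m}}(y,v)$. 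Intersecting these length sets gives: when $x\neq u$ the product distance is $\delta$ if $\delta\ge 1$ and is $2$ if $\delta=0$; when $x=u$ it is $\delta$ if $\delta\ge 2$, is $3$ if $\delta=1$, and is $0$ on the diagonal. This case analysis is the technical heart of the argument and the step most prone to error, since the two exceptional values ($2$ for $\delta=0,\ x\neq u$ and $3$ for $\delta=1,\ x=u$) arise precisely from the parity constraint in $C_{2m}$ clashing with the absence of loops in $K_{n}$.

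Translating this into matrices, let $A_{C},D_{C}$ be the adjacency and distance matrices of $C_{2m}$ and let $I,J$ denote identity and all-ones matrices of the appropriate order. The block with $x=u$ contributes $D_{C}+2A_{C}$ and the block with $x\neq u$ contributes $D_{C}+2I$, so that
\begin{eqnarray*}
D &=& I_{n}\otimes(D_{C}+2A_{C})+(J_{n}-I_{n})\otimes(D_{C}+2I)\\
&=& J_{n}\otimes(D_{C}+2I)+I_{n}\otimes(2A_{C}-2I).
\end{eqnarray*}
Since $J_{n}$ and $I_{n}$ share the eigenbasis $\{j_{n}\}\cup j_{n}^{\perp}$ (with $J_{n}$-eigenvalues $n$ and $0$), while $A_{C},D_{C},I$ are circulant and share the Fourier eigenvectors $V^{(j)}$ of Lemma \ref{adjc}, the vectors $w\otimes V^{(j)}$ simultaneously diagonalize $D$.

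Finally I would substitute the known spectra. By Lemma \ref{adjc} the eigenvalue of $A_{C}$ on $V^{(j)}$ is $2\cos(\pi j/m)$, and by Lemma \ref{lam2} (taking $s=0,\ t=1$, with cycle length $2m$) the eigenvalue of $D_{C}$ is $m^{2}$ for $j=0$, is $0$ for even $j\neq 0$, and is $-\mathrm{cosec}^{2}(\pi j/2m)$ for odd $j$. For $w=j_{n}$ the eigenvalue of $D$ equals $n(\mu^{D_{C}}_{j}+2)+(4\cos(\pi j/m)-2)$, which gives $2(n+1)+nm^{2}$ at $j=0$, the family $2(n-1)+4\cos(2p\pi/m)$ from even $j=2p$ with $p=1,\dots,m-1$, and the family $2(n-1)+4\cos((2q-1)\pi/m)-n\,\mathrm{cosec}^{2}((2q-1)\pi/2m)$ from odd $j=2q-1$ with $q=1,\dots,m$; for $w\perp j_{n}$ the $J_{n}$-term vanishes and the eigenvalue is simply $4\cos(\pi j/m)-2$, with multiplicity $n-1$, producing the last family. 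Collecting these over $j=0,\dots,2m-1$ and discarding repetitions yields the listed distinct eigenvalues, and a dimension count confirms the total multiplicity is $2mn$, the number of vertices of $K_{n}\otimes C_{2m}$.
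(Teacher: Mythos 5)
Your proposal is correct and arrives at exactly the same reduction as the paper, but by a somewhat different route. The paper also writes $D(K_n\otimes C_{2m})$ in block form with diagonal blocks $2A+D$ and off-diagonal blocks $2I+D$, but it then invokes the block-circulant machinery (Theorem \ref{bcir}) and evaluates the trigonometric sums $\sum_k\cos(2k\pi j/n)$ to show that the spectrum splits into that of $H_0=2(n-1)I+nD+2A$ and $n-1$ copies of $H_j=2(A-I)$. You instead decompose $D=J_n\otimes(D_C+2I)+I_n\otimes(2A_C-2I)$ and simultaneously diagonalize using the eigenbasis $\{j_n\}\cup j_n^{\perp}$ of $J_n$ together with the Fourier eigenvectors of the circulant blocks; the two branches $w=j_n$ and $w\perp j_n$ reproduce $H_0$ and $H_j$ exactly, and the final substitution of the spectra of $A_C$ and $D_C$ (via Lemma \ref{lam2}) is identical in both arguments. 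Your route is more elementary — it avoids the general block-circulant theorem and the cosine-sum manipulation entirely — and it has the additional merit of explicitly deriving the block entries from the walk-length/parity analysis in the Kronecker product (including the exceptional values $2$ for $\delta=0,\ x\neq u$ and $3$ for $\delta=1,\ x=u$, and the restriction $n\ge 3$ for connectedness), a step the paper asserts without justification. One small point: your collection over $j=0,\dots,2m-1$ in the $w\perp j_n$ branch correctly produces the eigenvalue $4\cos(0)-2=2$ as well, whereas the theorem statement indexes that family only by $r=1,\dots,2m-1$; your account agrees with the paper's own proof, which takes $r=0,1,\dots,2m-1$.
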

\begin{proof}
Let $A$ and $D$ be the adjacency matrix and distance matrix of $C_{2m}$ with index by $\{1, 2, \cdots, 2m \}$. Then the distance matrix of $K_{n} \otimes C_{2m}$ is a matrix of size $2mn\times2mn$ and let $D(K_{n} \otimes C_{2m})$ be the distance matrix of $K_{n} \otimes C_{2m}$ with index by $\pi=\{X_{1}, X_{2}, \cdots, X_{n}\}$ where $X_{i}=\{(i,1),(i,2), \cdots, (i,2m)\}$ for $1\leq i \leq n$ and can be written as 
    \begin{eqnarray*}\displaystyle {D(K_{n} \otimes C_{2m})}&=&{\begin{bmatrix}  {2A + D} &{2I + D} &{2I + D} &\cdots & {2I + D}\\
 {2I + D} &{2A + D} &{2I + D} &\cdots & {2I + D}\\
{2I + D} &{2I + D} &{2A + D} &\cdots & {2I + D}\\
\vdots &\vdots &\vdots &\ddots &\vdots \\
{2I + D} &{2I + D} &{2I + D} &\cdots & {2A + D}
 \end{bmatrix}}.
 \end{eqnarray*}
This $D(K_{n} \otimes C_{2m})$ is a real symmetric block circulant matrix. Then by applying Theorem \ref{bcir}, the eigenvalues of $D(K_{n} \otimes C_{2m})$ are the union of the eigenvalues of the matrices $H_{j}$ of size $2m \times 2m,$ for $j=0, 1, 2, \cdots, (n-1);$\\ where $H_{j}=(2A + D) + 2(2I + D) \displaystyle\sum_{k=1}^{h-1} \cos\left(\frac{2k \pi}{n}j\right) + \left\{ \begin{array}{rr}
        0 ;~~~~~~~~&\mbox{ if $n=2h-1$ }\\
         (2I + D)(-1)^j ;&\mbox{ if $n=2h$}~~~~~~
       \end{array}\right.$\\
Case I: If $n=2h-1$, then $H_{j}=(2A + D) + 2(2I + D) \displaystyle\sum_{k=1}^{h-1} \cos\left(\frac{2k \pi}{n}j\right).$
\begin{eqnarray*}
\mbox{ So,} ~~~~~~~~~~~~H_{0}&=&(2A + D) + 2(h-1)(2I + D)~~~~~~~~~~~~~~~~~~~~~~~~~~~~~~~~~~~~~~~~~~~~~~~~\\
&=&(2A + D) + (n-1)(2I + D)\\
&=& 2(n-1)I + nD + 2A.\\
\mbox{ and} ~~~~~~~~~~~~~H_{j}&=&(2A + D) + 2(2I + D) \displaystyle\sum_{k=1}^{h-1} \cos\left(\frac{2k \pi}{n}j\right);~~ \mbox{for}~ j=1,2, \cdots, (n-1).~~~~~\\
&=&(2A + D) + (2I + D)\left[\frac{\sin((h-\frac{1}{2})\frac{2\pi}{n}j)}{\sin(\frac{\pi}{n}j)} - 1\right]\\
&=&(2A + D) + (2I + D)\left[\frac{\sin((2h-1)\frac{\pi}{n}j)}{\sin(\frac{\pi}{n}j)} - 1\right]\\
&=&(2A + D) + (2I + D)\left[\frac{\sin(\pi j)}{\sin(\frac{\pi}{n}j)} - 1\right];~~~~~~[\because n=2h-1.]\\
&=&(2A + D) - (2I + D)\\
&=&2(A-I).
\end{eqnarray*}
Case II: If $n=2h$, then $H_{j}=(2A + D) + 2(2I + D) \displaystyle\sum_{k=1}^{h-1} \cos\left(\frac{2k \pi}{n}j\right) + (-1)^{j}(2I + D).$
   \begin{eqnarray*}
 \mbox{So,}~~~~~~~ H_{0}&=&(2A + D) + 2(2I + D)(h-1) + (2I + D)\\
 &=&(2A + D) + (2h-1)(2I + D)\\
 &=& (2A + D) + (n-1)(2I + D) \\
 &=& 2(n-1)I + nD + 2A~~~~~~~~~~~~~~~~~~~~~~~~~~~~~~~~~~~~~~~~~~~~~~~~~~~~~
 \end{eqnarray*}
 
 \begin{eqnarray*}
 \mbox{and}~~~~~~ H_{j}&=&(2A + D) + 2(2I + D) \displaystyle\sum_{k=1}^{h-1} \cos\left(\frac{2k \pi}{n}j\right) - (2I + D); ~\mbox{for} ~j= 1, 3, 5, \cdots, (n-1).\\ 
 &=&2(A-I) + (2I+D)\left[\frac{\sin((h-\frac{1}{2})\frac{2\pi}{n}j)}{\sin(\frac{\pi}{n}j)}-1 \right]\\
 &=&2(A-I) + (2I+D)\left[\frac{\sin((2h-1)\frac{\pi}{n}j)}{\sin(\frac{\pi}{n}j)}-1 \right]\\
 &=&2(A-I) + (2I+D)\left[\frac{\sin(\pi j -\frac{\pi}{n}j)}{\sin(\frac{\pi}{n}j)}-1 \right]; ~~~~[\because n=2h.]\\
 &=&2(A-I) + (2I+D)\left[\frac{\sin(\frac{\pi}{n}j)}{\sin(\frac{\pi}{n}j)}-1 \right];~~~~~ [\because j~ \mbox{is odd}.]\\
 &=&2(A-I).\\
 \mbox{Also}~~~~~~~ H_{j}&=&(2A + D) + 2(2I + D) \displaystyle\sum_{k=1}^{h-1} \cos\left(\frac{2k \pi}{n}j\right) + (2I + D); ~\mbox{for}~ j= 2, 4, 6, \cdots, (n-2).\\ 
  &=&(2A+D) +(2I+D)+ (2I+D)\left[\frac{\sin(\pi j -\frac{\pi}{n}j)}{\sin(\frac{\pi}{n}j)}-1 \right]\\
  &=&(2A+D) +(2I+D)+ (2I+D)\left[-\frac{\sin(\frac{\pi}{n}j)}{\sin(\frac{\pi}{n}j)}-1 \right];~~~~~ [\because j ~\mbox{is even.} ]\\
  &=&(2A+D) - (2I+D)\\
  &=&2(A-I).
  \end{eqnarray*}
Therefore, for any $n$, both the cases we get,
\begin{eqnarray*}
H_{0}&=& 2(n-1)I + nD + 2A\\
\mbox{and}~~~~ H_{j}&=&2(A-I); ~\mbox{for}~ j=1, 2, \cdots, (n-1).
\end{eqnarray*}
The eigenvalues of $A$ are $\lambda_{r}=2\cos\left(\frac{2\pi}{2m}r\right)$ for $r=0, 1, 2, \cdots, (2m-1).$
 \begin{eqnarray*}
 \mbox{The eigenvalues of $D$ are }~~~
 \mu_{r}=\left \{ \begin{array}{rl}
        m^2; ~&\mbox{ for $r=0$}\\
        0; ~&\mbox{ for $r$ is even}\\
-\mathrm{cosec}^{2}\left(\frac{\pi}{2m}r\right); ~&\mbox{ for $r$ is odd}.~~~~~~~~~~~~~~~~~~~~~~~~~~~~~~~~~~~~~~~~~~~~~~~~~~~~
       \end{array}\right.
    \end{eqnarray*}
Therefore, by Lamma \ref{lam2}, the eigenvalues of $H_{0}$ are 
\begin{eqnarray*}
\lambda_{r}+\mu_{r}&=&\left\{ \begin{array}{rl}
    2(n-1) + nm^2+4; &\mbox{ for $r=0$}\\
        2(n-1)+4\cos\left(\frac{2\pi}{2m}r\right); &\mbox{ for $r=2, 4, \cdots, (2m-2)$}\\
2(n-1)-n\mathrm{cosec}^{2}\left(\frac{\pi}{2m}r\right) + 4\cos\left(\frac{2\pi}{2m}r\right) ; &\mbox{ for $r=1, 3, \cdots, (2m-1)$}. 
\end{array} \right. 
       \end{eqnarray*}
       More precisely, 
       \begin{eqnarray*}
           \lambda_{0}+\mu_{0}&=&2n+ nm^2+2;\\
           \lambda_{2k}+\mu_{2k}&=& 2(n-1)+4\cos\left(\frac{2k\pi}{m}\right); ~~~~~\mbox{ for $k=1, 2, \cdots, (m-1)$ };\\
           \lambda_{2k-1}+\mu_{2k-1}&=&2(n-1)- n \mathrm{cosec}^{2}\left(\frac{(2k-1)\pi}{2m}\right) + 4\cos\left(\frac{(2k-1)\pi}{m}\right) ; ~ \mbox{ for $k=1, 2, \cdots, m$}.
       \end{eqnarray*}
So, the eigenvalues of $H_{0}$ are $2n+ nm^2+2,$ $ 2(n-1)+4\cos\left(\frac{2p\pi}{m}\right),$ $ 2(n-1)- n \mathrm{cosec}^{2}\left(\frac{(2q-1)\pi}{2m}\right) + 4\cos\left(\frac{(2q-1)\pi}{m}\right)$ for $p=1, 2, \cdots, (m-1)$ and $q=1, 2, \cdots, m.$\\
The eigenvalues of $H_{j}$ are $\left(4\cos\left(\frac{\pi}{m}r\right) -2\right)$ for $r=0, 1, 2, \cdots, (2m-1)$ and $j=1, 2, \cdots, (n-1).$ \\
Hence complete the proof.
\end{proof}

\begin{theorem}\label{c2m+1}
Let $K_{n}$ and $C_{n}$ denote the complete and cycle graphs with $n$ vertices respectively. Then the distinct distance eigenvalues of $K_{n} \otimes C_{2m+1}$ are $2(n + 1) + n(m^2 +m) ,$ $ 2(n-1) + 4\cos\left(\frac{4p\pi}{2m+1}\right) - \frac{n}{4} \sec^2\left(\frac{p\pi}{2m+1}\right),$ $ 2(n-1) + 4\cos\left(\frac{2(2q-1)\pi}{2m+1}\right) - \frac{n}{4} \mathrm{cosec}^{2}\left(\frac{(2q-1)\pi}{2(2m+1)}\right),$ $\left(4\cos\left(\frac{2\pi r}{2m+1}\right)-2\right)$ for $p=1, 2, \cdots , (m-1);$ $q=1, 2, \cdots, m;$ and $r= 0, 1, 2, \cdots, 2m.$ 
\end{theorem}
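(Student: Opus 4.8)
The plan is to follow the proof of Theorem \ref{c2m} almost verbatim, the only genuinely new work being the distance computation inside the odd cycle. Throughout assume $n\ge 3$ and $m\ge 2$, so that $K_n$ is non-bipartite and $C_{2m+1}$ is a genuine odd cycle; by Lemma \ref{connected} the graph $K_n\otimes C_{2m+1}$ is then connected, since $C_{2m+1}$ contains an odd cycle. Let $A$ and $D$ denote the adjacency and distance matrices of $C_{2m+1}$, both of order $N=2m+1$, with vertices ordered $1,2,\dots,2m+1$. The first step is to show that, with the block indexing $X_i=\{(i,1),\dots,(i,2m+1)\}$ for $1\le i\le n$, the distance matrix $D(K_n\otimes C_{2m+1})$ is the real symmetric block circulant matrix with diagonal blocks $2A+D$ and all off-diagonal blocks $2I+D$ --- exactly the block pattern appearing in Theorem \ref{c2m}.

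The heart of the argument is this block identification, and it is where the odd length of the cycle matters. I would use the fact that a walk of length $k$ joins $(i,a)$ to $(i',a')$ in a Kronecker product precisely when $K_n$ admits an $i$--$i'$ walk of length $k$ and $C_{2m+1}$ admits an $a$--$a'$ walk of length $k$, so that the distance is the least such $k$. For $K_n$ with $n\ge 3$ the available walk lengths are all $k\ge 1$ when $i\ne i'$, and all $k\ge 0$ except $k=1$ when $i=i'$. In $C_{2m+1}$, writing $\delta$ for the cycle distance from $a$ to $a'$, the available walk lengths are those $k\ge\delta$ with $k\equiv\delta\pmod 2$ together with those $k\ge N-\delta$ with $k\equiv N-\delta\pmod 2$; this second (long-way-around) class is the new feature of the non-bipartite odd cycle. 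Intersecting these sets and minimizing in each case ($i=i'$ versus $i\ne i'$, and $\delta=0,1,\ge 2$) gives: on the diagonal the distance is $\delta$ for $\delta\ge 2$ and $3$ for $\delta=1$, which is exactly $(2A+D)_{a,a'}$; off the diagonal the distance is $2$ for $\delta=0$ and $\delta$ for $\delta\ge 1$, which is exactly $(2I+D)_{a,a'}$. The point requiring care is the diagonal case $\delta=1$, where one must check that the even long-way walk of length $N-1=2m$ does not beat the short odd walk of length $3$; this holds precisely because $m\ge 2$. I expect this verification to be the main obstacle, since it is the only place the even and odd cases diverge.

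Once the block form is in hand, the remainder carries over unchanged, because Theorem \ref{bcir} only sees the block pattern, which is identical to that of Theorem \ref{c2m}. Applying Theorem \ref{bcir} and summing the cosine series $\sum_{k=1}^{h-1}\cos(\tfrac{2k\pi}{n}j)$ exactly as before yields the reduced matrices $H_0=2(n-1)I+nD+2A$ and $H_j=2(A-I)$ for $j=1,\dots,n-1$. Since the eigenvalues of $A$ are $\lambda_r=2\cos(\tfrac{2\pi}{2m+1}r)$, each $H_j$ with $j\ge 1$ contributes the eigenvalues $2\lambda_r-2=4\cos(\tfrac{2\pi r}{2m+1})-2$ for $r=0,1,\dots,2m$, giving the fourth family in the statement.

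For $H_0$ I would apply Lemma \ref{lam2} in its odd case ($N=2m+1$) with $s=2$, $t=n$ to the matrix $2A+nD$, and then shift every eigenvalue by $2(n-1)$ coming from the $2(n-1)I$ term. The index $r=0$ gives $2(n-1)+\big(4+\tfrac{N^2-1}{4}n\big)=2(n+1)+n(m^2+m)$ after using $\tfrac{N^2-1}{4}=m^2+m$; the even indices $r=2p$ give $2(n-1)+4\cos(\tfrac{4p\pi}{2m+1})-\tfrac{n}{4}\sec^2(\tfrac{p\pi}{2m+1})$; and the odd indices $r=2q-1$ give $2(n-1)+4\cos(\tfrac{2(2q-1)\pi}{2m+1})-\tfrac{n}{4}\mathrm{cosec}^2(\tfrac{(2q-1)\pi}{2(2m+1)})$, using $\tfrac{\pi}{2N}(2p)=\tfrac{p\pi}{2m+1}$ and $\tfrac{\pi}{2N}(2q-1)=\tfrac{(2q-1)\pi}{2(2m+1)}$. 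Taking the union of the spectra of all the $H_j$ as prescribed by Theorem \ref{bcir}, and using the symmetry $r\leftrightarrow N-r$ (which identifies the even index $r=2m$ with the odd index $r=1$) to fix the ranges $p=1,\dots,m-1$, $q=1,\dots,m$ and $r=0,1,\dots,2m$, produces precisely the families listed in the statement.
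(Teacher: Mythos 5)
Your proposal is correct and follows essentially the same route as the paper's proof: the same block-circulant form of $D(K_n\otimes C_{2m+1})$ with blocks $2A+D$ and $2I+D$, reduction via Theorem \ref{bcir} to $H_0=2(n-1)I+nD+2A$ and $H_j=2(A-I)$, and then Lemma \ref{lam2} (odd case, $s=2$, $t=n$). Your explicit walk-length verification of the block entries (in particular the diagonal $\delta=1$ case giving distance $3$) and the $r\leftrightarrow N-r$ symmetry remark are details the paper leaves implicit, but they do not change the argument.
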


\begin{proof}
    Let $A$ and $D$ be the adjacency and distance matrix of $C_{2m+1}.$ The distance matrix of $K_{n} \otimes C_{2m+1}$ is a matrix of size $(2m+1)n\times(2m+1)n$ with index by $\pi=\{X_{1}, X_{2}, \cdots, X_{n}\}$ where $X_{i}=\{(i,1),(i,2), \cdots, (i,2m+1)\}$ for $1\leq i \leq n$ and can be written as 
    \begin{eqnarray*}\displaystyle {D(K_{n} \otimes C_{2m+1})}&=&{\begin{bmatrix}  {2A + D} &{2I + D} &{2I + D} &\cdots & {2I + D}\\
 {2I + D} &{2A + D} &{2I + D} &\cdots & {2I + D}\\
{2I + D} &{2I + D} &{2A + D} &\cdots & {2I + D}\\
\vdots &\vdots &\vdots &\ddots &\vdots \\
{2I + D} &{2I + D} &{2I + D} &\cdots & {2A + D}
 \end{bmatrix}}.
 \end{eqnarray*}
 This $D(K_{n} \otimes C_{2m+1})$ is a real symmetric block circulant matrix. Then by applying Theorem \ref{bcir}, the eigenvalues of $D(K_{n} \otimes C_{2m+1})$ are the union of the eigenvalues of the matrices $H_{j}$ of size $(2m+1) \times (2m+1),$ for $j=0, 1, 2, \cdots, (n-1);$\\ where $H_{j}=(2A + D) + 2(2I + D) \displaystyle\sum_{k=1}^{h-1} \cos\left(\frac{2k \pi}{n}j\right) + \left\{ \begin{array}{rr}
        0 ; ~~~&\mbox{ if $n=2h-1$ }\\
(2I + D)(-1)^j ; &\mbox{ if $n=2h$}.~~~~~
 \end{array} \right.  $\\
By using same technique of the Theorem \ref{c2m}, we can get
\begin{eqnarray*}
H_{0}&=& 2(n-1)I + nD + 2A\\
\mbox{and}~~~~ H_{j}&=&2(A-I); ~\mbox{for}~ j=1, 2, \cdots, (n-1).
\end{eqnarray*}
The eigenvalues of $A$ are  $\lambda_{r} = 2\cos\left(\frac{2\pi r}{2m+1}\right) $ for $r = 0, 1, 2, \cdots, 2m.$
\begin{eqnarray*}
\mbox {The eigenvalues of $D$ are} ~~\mu_{r} = \left\{ \begin{array}{rl}
 m^2+ m; &\mbox{ for $r = 0$}\\
        -\frac{1}{4}\sec^2\left(\frac{\pi}{2(2m+1)}r\right); &\mbox{ for $r=2, 4, \cdots, 2m$}\\
         -\frac{1}{4} \mathrm{cosec}^{2}\left(\frac{\pi}{2(2m+1)}r\right); &\mbox{ for $r = 1, 3, \cdots, (2m-1).$ }  
       \end{array}\right.~~~~~~~~~~~~~~~~~~~~~~~~~~~~~~
       \end{eqnarray*}
       Therefore, by using Lemma \ref{lam2}, the eigenvalues of $H_{0}$ are
       \begin{eqnarray*}
\lambda_{r}+\mu_{r} = \left\{ \begin{array}{rl}
       2(n-1)+ n(m^2+m)+4; &\mbox{ for $r = 0$}\\
        2(n-1)-\frac{n}{4}\sec^2\left(\frac{\pi r}{2(2m+1)}\right)+4\cos\left(\frac{2\pi r}{2m+1}\right); &\mbox{ for $r = 2, 4, \cdots, (2m-2)$}\\
         2(n-1)-\frac{n}{4}\mathrm{cosec}^{2}\left(\frac{\pi r}{2(2m+1)}\right) + 4\cos\left(\frac{2\pi r}{2m+1}\right) ; &\mbox{ for $r = 1, 3, \cdots, (2m-1)$}.  
       \end{array} \right.
       \end{eqnarray*}
       More precisely,
\begin{eqnarray*}
\lambda_{0}+\mu_{0}& = &2(n+1) + n(m^2+m);\\
   \lambda_{2k}+\mu_{2k}& = & 2(n-1)-\frac{n}{4}\sec^2\left(\frac{\pi k}{(2m+1)}\right)+4\cos\left(\frac{4\pi k}{2m+1}\right); ~~~~\mbox{ for $k=1, 2, \cdots, (m-1)$ }\\
         \lambda_{2k-1}+\mu_{2k-1}& = &2(n-1)-\frac{n}{4}\mathrm{cosec}^{2}\left(\frac{(2k-1)\pi}{2(2m+1)}\right) + 4\cos\left(\frac{2(2k-1)\pi}{2m+1}\right) ; ~~\mbox{ for $k=1, 2, \cdots, m$} 
\end{eqnarray*}
 So, the eigenvalues of $H_{0}$ are $2(n+1) + n(m^2+m), 2(n-1)-\frac{n}{4}\sec^2\left(\frac{\pi p}{(2m+1)}\right) + 4\cos\left(\frac{4\pi p}{2m+1}\right), \\2(n-1) -\frac{n}{4}\mathrm{cosec}^{2}\left(\frac{(2q-1)\pi}{2(2m+1)}\right) + 4\cos\left(\frac{2(2q-1)\pi}{2m+1}\right)$ for $p=1, 2, \cdots, (m-1)$ and $q=1, 2, \cdots, m.$\\
       The eigenvalues of $H_{j}$ are $\left(4\cos\left(\frac{2\pi r}{2m+1}\right)-2\right)$ for $r= 0, 1, 2, \cdots, 2m;$ and $j=1, 2, \cdots, (n-1).$ \\
       Hence complete the proof.
\end{proof}
\begin{theorem}\label{knkm}
    Let $K_{n}$ be the complete graph of $n$ vertices. Then the distance eigenvalues of $K_{n} \otimes K_{m}$ are $mn+m+n-3, (m-3),(n-3),-3$ with multiplicity $1,(n-1),(m-1),(n-1)(m-1)$ respectively.
\end{theorem}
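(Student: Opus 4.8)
The plan is to show that $K_{n}\otimes K_{m}$ has diameter $2$ and to express its distance matrix as a short linear combination of its adjacency matrix, the all-ones matrix, and the identity; the spectrum then follows from the Kronecker structure of the adjacency matrix. Throughout I assume $n,m\ge 3$ (so that, by Lemma \ref{connected}, the product is connected).

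First I would determine all pairwise distances. Writing a vertex as $(x,y)$ with $x\in V(K_{n})$ and $y\in V(K_{m})$, the product rule gives $(x,y)\sim(u,v)$ if and only if $x\ne u$ and $y\ne v$. I would then split into coordinate-agreement cases: two distinct vertices that differ in both coordinates are adjacent (distance $1$); two vertices agreeing in exactly one coordinate are non-adjacent, and I would exhibit a common neighbour to show their distance is exactly $2$. Concretely, for $(x,y)$ and $(x,v)$ with $y\ne v$ a common neighbour $(a,b)$ exists because one may choose $a\ne x$ (needs $n\ge 2$) and $b\notin\{y,v\}$ (needs $m\ge 3$), and symmetrically for vertices agreeing in the second coordinate when $n\ge 3$. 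Hence the diameter is $2$, every off-diagonal entry of $D$ equals $1$ on adjacent pairs and $2$ on non-adjacent pairs, and therefore
\[
D = 2(J_{mn}-I_{mn}) - A, \qquad A=A(K_{n}\otimes K_{m})=(J_{n}-I_{n})\otimes(J_{m}-I_{m}).
\]

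The next step is a simultaneous diagonalization. The all-ones vector $j_{mn}=j_{n}\otimes j_{m}$ is a common eigenvector of $A$ and $J_{mn}$, while on its orthogonal complement $J_{mn}$ acts as $0$, so there $D=-2I_{mn}-A$. I would list the eigenvalues of $A$ as products of the eigenvalues $\{n-1,-1\}$ of $K_{n}$ and $\{m-1,-1\}$ of $K_{m}$: namely $(n-1)(m-1)$ with eigenvector $j_{mn}$, then $-(n-1)$ with multiplicity $m-1$, $-(m-1)$ with multiplicity $n-1$, and $1$ with multiplicity $(n-1)(m-1)$. Substituting into the displayed identity, on $j_{mn}$ one gets $Dj_{mn}=\bigl[2(mn-1)-(n-1)(m-1)\bigr]j_{mn}=(mn+m+n-3)\,j_{mn}$, while for each remaining eigenvalue $\theta$ of $A$ the corresponding distance eigenvalue is $-2-\theta$. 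This produces $n-3$ with multiplicity $m-1$, $m-3$ with multiplicity $n-1$, and $-3$ with multiplicity $(n-1)(m-1)$, matching the claim.

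The only genuinely nontrivial part is the distance computation, in particular verifying that no pair of vertices lies at distance greater than $2$; this is exactly the common-neighbour argument and is where the hypothesis $n,m\ge 3$ is needed. Once $D=2(J_{mn}-I_{mn})-A$ is in hand, the spectral part is routine, since $A$ and $J_{mn}$ commute and are diagonalized by the same basis, so I do not expect any obstacle in extracting the eigenvalues and multiplicities.
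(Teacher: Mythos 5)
Your argument is correct, but it takes a genuinely different route from the paper. The paper never writes $D$ globally in terms of $A$ and $J$; instead it partitions $D(K_{n}\otimes K_{m})$ into $n\times n$ blocks of size $m$ (diagonal blocks $2D(K_{m})$, off-diagonal blocks $2I+D(K_{m})$), observes that this is a real symmetric block circulant matrix, and invokes Theorem \ref{bcir} to reduce the spectrum to that of $H_{0}=2(n-1)I+(n+1)D(K_{m})$ and $H_{j}=D(K_{m})-2I$ for $j=1,\dots,n-1$, then reads off the eigenvalues from those of $D(K_{m})=J_{m}-I_{m}$. Your identity $D=2(J_{mn}-I_{mn})-A$ with $A=(J_{n}-I_{n})\otimes(J_{m}-I_{m})$, combined with the simultaneous diagonalization of $A$ and $J_{mn}$, reaches the same eigenvalues and multiplicities by a more elementary and symmetric computation: it treats the two factors on equal footing and makes the eigenvectors explicit as tensor products, whereas the block circulant method is the one the paper reuses uniformly for all its Kronecker-product theorems (with $C_{2m}$, $C_{2m+1}$, $J(m,r)$, $H(d,q)$), so it buys consistency across the paper at the cost of heavier machinery here. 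Both derivations rest on the same underlying fact that every non-adjacent pair of distinct vertices is at distance exactly $2$; you justify this with an explicit common-neighbour argument and correctly isolate the hypothesis $n,m\ge 3$ (for $n=2$ the pair $(1,y),(2,y)$ has no common neighbour and the distance is $3$, so the stated spectrum fails), a restriction the paper leaves implicit in its block description of $D(K_{n}\otimes K_{m})$. Your verification is, if anything, slightly more careful on that point.
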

\begin{proof}
    Let $D$ be the distance matrix of the complete graph $K_{m}.$ The distance matrix of $K_{n}\otimes K_{m}$ is a matrix of size $mn \times mn$ with index by $\pi=\{X_{1}, X_{2}, \cdots, X_{n}\}$ where $X_{i}=\{(i,1),(i,2), \cdots, (i,m)\}$ for $1\leq i \leq n$ and can be written as 
    \begin{eqnarray*}\label{}\displaystyle {D(K_{n} \otimes K_{m})}&=&{\begin{bmatrix}  {2D } &{2I + D} &{2I + D} &\cdots & {2I + D}\\
 {2I + D} &{2D } &{2I + D} &\cdots & {2I + D}\\
{2I + D} &{2I + D} &{2D } &\cdots & {2I + D}\\
\vdots &\vdots &\vdots &\ddots &\vdots \\
{2I + D} &{2I + D} &{2I + D} &\cdots & {2D }
 \end{bmatrix}}.
 \end{eqnarray*}
 Now $D(K_{n} \otimes K_{m})$ is a real symmetric block circulant matrix. Then applying the Theorem \ref{bcir}, the eigenvalues of $D(K_{n} \otimes K_{m})$ are the union of the eigenvalues of the matrices $H_{j}$ of size $m \times m,$ for $j=0, 1, 2, \cdots, (n-1);$\\ where $H_{j}=2D + 2(2I + D) \displaystyle\sum_{k=1}^{h-1} \cos\left(\frac{2k \pi}{n}j\right) + \left\{ \begin{array}{rr}
        0 ; &\mbox{ if $n=2h-1$ }\\
         (2I + D)(-1)^j ; & \mbox{ if $n=2h.$}
       \end{array} \right. $\\
By using same technique of the Theorem \ref{c2m}, we can get
\begin{eqnarray*}
H_{0}&=& 2(n-1)I + (n+1)D \\
\mbox{and}~~ H_{j} &=& D-2I; ~\mbox{for}~ j=1, 2, \cdots, (n-1).
\end{eqnarray*} 
 The eigenvalues of $D$ are $(m-1)$ and $-1$ with multiplicity $(m-1).$
 So, the eigenvalues of $H_{0}$ are $mn+m+n-3$ and $(n-3)$ with multiplicity $(m-1).$
 And the eigenvalues of $H_{j}$ are $(m-3)$ and $-3$ with multiplicity 
 $(m-1)$, for $j= 1, 2, \cdots, (n-1).$
 Hence complete the proof.
\end{proof}
\begin{observation}
    The graph $K_{n} \otimes K_{n}$ is a simple connected regular graph with order $n^2$ and regularity $(n-1)^{2}$. This is a distance regular graph with diameter $2$ (by Theorem \ref{dia}) and intersection array $\{(n-1)^2, 2n-2 ; 1 ,(n-1)(n-2)\}.$ 
If we replace $m$ by $n$ in the above Theorem \ref{knkm}, then we can get a distance spectrum of $K_{n} \otimes K_{n}$ in the following Corollary \ref{corokn}. Whereas $K_{n} \otimes K_{n} \otimes K_{n}$ is a regular graph with diameter $2$ and has $4$ distinct adjacency eigenvalues. In the case of a distance regular graph with diameter $d$, the number of distinct adjacency eigenvalues is exactly $d+1$. So $K_{n} \otimes K_{n} \otimes K_{n}$ is not a distance regular graph.
\end{observation}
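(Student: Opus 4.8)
The plan is to read the observation as four assertions about $\mathcal{G}:=K_n\otimes K_n$ and $\mathcal{H}:=K_n\otimes K_n\otimes K_n$ and to verify each one from the adjacency rule of the Kronecker product together with the spectral facts already established. Throughout I would take $n\ge 3$, so that the products below are connected by Lemma \ref{connected} (since $K_n$ contains a triangle) and are not complete.

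First I would record the elementary parameters of $\mathcal{G}$. Its vertex set has size $n^2$, and since the degree of a vertex in a Kronecker product is the product of the factor degrees, $\mathcal{G}$ is regular of valency $(n-1)^2$. For the diameter I would argue directly from the rule that $(i,a)$ and $(j,b)$ are adjacent exactly when $i\neq j$ and $a\neq b$: two distinct vertices lie at distance $1$ when they differ in both coordinates, and at distance $2$ otherwise, because whenever $n\ge 3$ one can choose an intermediate coordinate value avoiding the two prescribed ones to form a length-$2$ walk. Hence no pair is farther than $2$ while some pair attains $2$, so the diameter is exactly $2$; for $n\ge 4$ this also agrees with Theorem \ref{dia}.

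Next I would establish distance regularity and the intersection array by showing $\mathcal{G}$ is strongly regular via common-neighbour counts. For an adjacent pair $(i,a),(j,b)$, a common neighbour $(k,c)$ is precisely one with $k\notin\{i,j\}$ and $c\notin\{a,b\}$, so there are $a_1=(n-2)^2$ of them, independent of the chosen pair; for a pair at distance $2$ (agreeing in one coordinate) the analogous count gives $c_2=(n-1)(n-2)$, again pair-independent. Together with $b_0=(n-1)^2$ and $c_1=1$ this yields $b_1=b_0-a_1-c_1$ and hence the full intersection array, and the pair-independence of $a_1$ and $c_2$ is exactly the statement that $\mathcal{G}$ is strongly regular, so distance regular of diameter $2$. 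Its distance spectrum is then the $m=n$ specialisation of Theorem \ref{knkm}.

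Finally, for $\mathcal{H}$ I would combine the two sides. On the adjacency side eigenvalues multiply under the Kronecker product, so the spectrum of $A(\mathcal{H})$ consists of all products of three numbers drawn from $\{n-1,-1\}$, namely $(n-1)^3,\,-(n-1)^2,\,(n-1),\,-1$, which are four \emph{distinct} values once $n\ge 3$. The same distance analysis as above shows $\mathcal{H}$ is regular of diameter $2$. I would then invoke the standard fact (see \cite{brow}) that a distance-regular graph of diameter $d$ has exactly $d+1$ distinct adjacency eigenvalues; a diameter-$2$ distance-regular graph would therefore have only $3$, whereas $\mathcal{H}$ has $4$, so $\mathcal{H}$ cannot be distance regular. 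The step requiring the most care is the diameter computation for the Kronecker products — one must rule out that some pair is forced to distance $3$ by a walk-parity obstruction — together with confirming that the common-neighbour counts defining $a_1$ and $c_2$ really are independent of the chosen pair, which is the genuine content of distance regularity; the eigenvalue multiplicities and the final counting argument are then routine.
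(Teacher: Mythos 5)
Your proposal is correct, and in fact it supplies an argument where the paper gives none: the Observation is stated without proof, so your direct common-neighbour counting (showing $\mathcal{G}=K_n\otimes K_n$ is strongly regular, hence distance regular of diameter $2$) is a legitimate and more self-contained route than the paper's bare appeal to Theorem \ref{dia}, which, as you note, only applies for $n\ge 4$ since it requires a complete $t$-partite factor with $t>3$. The eigenvalue-product argument for $K_n\otimes K_n\otimes K_n$ and the appeal to the fact that a distance regular graph of diameter $d$ has exactly $d+1$ distinct adjacency eigenvalues are exactly what the Observation intends.

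One point you should carry through to completion, because it exposes a discrepancy with the paper: your own relation $b_1=b_0-a_1-c_1$ gives
\[
b_1=(n-1)^2-(n-2)^2-1=2n-4,
\]
whereas the paper's intersection array claims $b_1=2n-2$. Your value is the correct one: a direct count of the neighbours of $y=(j,b)$ at distance $2$ from an adjacent $x=(i,a)$ gives the $(n-2)$ vertices $(i,c)$ with $c\notin\{a,b\}$ plus the $(n-2)$ vertices $(k,a)$ with $k\notin\{i,j\}$, i.e.\ $2n-4$; consistency checks confirm it, since $k_2=k_1b_1/c_2=(n-1)^2(2n-4)/\bigl((n-1)(n-2)\bigr)=2(n-1)$ matches the actual number of vertices agreeing with $x$ in exactly one coordinate, while the paper's $2n-2$ fails both this check and $b_0=c_1+a_1+b_1$. (The paper appears to have written $k_2=2n-2$ in the slot of $b_1$.) So do not stop at ``this yields $b_1$ and hence the full intersection array''; evaluate it, and note that the array in the Observation should read $\{(n-1)^2,\,2n-4;\,1,\,(n-1)(n-2)\}$. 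Everything else in your write-up is sound.
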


\begin{corollary}\label{corokn}
     The distance eigenvalues of $K_{n} \otimes K_{n}$ are ${(n-1)(n+3)}, (n-3),-3$ with multiplicity $1, (2n-2), (n-1)^2$ respectively.
\end{corollary}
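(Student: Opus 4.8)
The plan is to obtain this statement as a direct specialization of Theorem \ref{knkm}, setting $m=n$. Since $K_{n}\otimes K_{n}$ is precisely the graph $K_{n}\otimes K_{m}$ with $m=n$, every conclusion of that theorem applies verbatim once we substitute, so no fresh spectral computation is needed; the work is purely arithmetic bookkeeping of the four eigenvalues and their multiplicities.

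First I would substitute $m=n$ into the top eigenvalue $mn+m+n-3$, obtaining $n^{2}+2n-3$, and then factor this as $(n-1)(n+3)$; its multiplicity stays $1$, corresponding to the eigenvector $j_{n^{2}}$. Next I would treat the bottom eigenvalue $-3$, whose multiplicity $(n-1)(m-1)$ becomes $(n-1)^{2}$ upon setting $m=n$.

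The one point that genuinely requires attention is the middle of the list. In Theorem \ref{knkm} there are two \emph{distinct} eigenvalues $(m-3)$ with multiplicity $(n-1)$ and $(n-3)$ with multiplicity $(m-1)$. When $m=n$ these two values coincide at $n-3$, so they are no longer distinct and their multiplicities must be \emph{merged} by addition, giving $(n-1)+(m-1)=2n-2$. This collapse is the only step where care is needed, and it is what changes the count of distinct eigenvalues from four (in the general theorem) to three (in the corollary).

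As a consistency check I would verify that the multiplicities sum to the order of the graph: $1+(2n-2)+(n-1)^{2}=n^{2}$, which matches $|V(K_{n}\otimes K_{n})|=n\cdot n$. Since this identity holds, the listed spectrum accounts for all $n^{2}$ eigenvalues and the statement follows. There is no real obstacle here beyond correctly handling the coincidence of the two middle eigenvalues; everything else is immediate substitution into the already-proved Theorem \ref{knkm}.
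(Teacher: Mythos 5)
Your proposal is correct and is exactly the route the paper takes: the Observation preceding Corollary \ref{corokn} states that the corollary is obtained by setting $m=n$ in Theorem \ref{knkm}, and your substitution, the factorization $n^{2}+2n-3=(n-1)(n+3)$, and the merging of the two coinciding eigenvalues $(m-3)$ and $(n-3)$ into $n-3$ with multiplicity $2n-2$ are precisely the implicit steps. The multiplicity sum check $1+(2n-2)+(n-1)^{2}=n^{2}$ is a sensible addition that the paper omits.
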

%%%%%%%%%%%%%%%%%%%%%%%%%%%%%%%%%%%%%%%%%%%%%%%%%%%%%%%%%%%%%%%%%%%%%

\begin{theorem}\label{johndist}
    Let $K_{n}$ be the complete graph and $J(m,r)$ be the Johnson graph. Consider the distinct adjacency eigenvalues of $J(m,r)$ be $\lambda_{i}=(r-i)(m-r-i),$ for $i=0, 1, \cdots, r$ and the distinct distance eigenvalues of $J(m,r)$ be $\mu_{0}=s, \mu_{1}=-\frac{s}{m-1}, \mu_{2}=0$ where $s=\displaystyle \sum_{j=0}^{r} jk_{j}$ and $k_{j}=\binom{r}{j}\binom{m-r}{j}$ for $j=0,1, \cdots, r.$ Then the distance eigenvalues of $K_{n} \otimes J(m,r)$ are $2n-2 + ns +\lambda_{0}, 2n-2-\frac{ns}{m-1}+\lambda_{1}, 2n-2+\lambda_{i}$ for $i=2,3, \cdots, r;$ and $\lambda_{i}-2$ for $i=0,1, \cdots, r.$
     
\end{theorem}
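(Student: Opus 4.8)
The plan is to mirror the block-circulant strategy of Theorem \ref{c2m}, the only genuinely new ingredient being the precise form of the diagonal block, which for the Johnson graph differs from the cycle case. Let $A$ and $D$ denote the adjacency and distance matrices of $J(m,r)$ under a fixed vertex ordering, and index the $\binom{m}{r}n$ vertices of $K_n \otimes J(m,r)$ by the blocks $X_i = \{(i,x) : x \in V(J(m,r))\}$ for $1 \le i \le n$. First I would establish that
\[ D(K_n \otimes J(m,r)) = Circ(A+D,\, 2I+D,\, \dots,\, 2I+D), \]
a real symmetric matrix in $\mathcal{BC}_{n,\binom{m}{r}}$. Once this block form is in hand, Theorem \ref{bcir} applied exactly as in the proof of Theorem \ref{c2m} collapses the spectrum to that of $H_0 = (A+D) + (n-1)(2I+D) = 2(n-1)I + nD + A$ together with $n-1$ copies of the block $H_j = (A+D) - (2I+D) = A - 2I$, $j = 1, \dots, n-1$.

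The crux, and the main obstacle, is justifying this block form, in particular the coefficient $1$ (not $2$) in front of $A$ in the diagonal block. Since $K_n$ contains an odd cycle for $n \ge 3$, Lemma \ref{connected} ensures $K_n \otimes J(m,r)$ is connected, and a walk of length $k$ exists in a Kronecker product between $(i,x)$ and $(i',x')$ if and only if $K_n$ admits an $(i,i')$-walk of length $k$ and $J(m,r)$ admits an $(x,x')$-walk of length $k$; hence the distance between two product vertices is the least common admissible walk length. In $K_n$ the $(i,i')$-walk lengths are all $k \ge 1$ when $i \ne i'$ and all $k \in \{0\} \cup \{k \ge 2\}$ when $i = i'$. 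Intersecting these with the walk lengths in $J(m,r)$ gives, for $i \ne i'$, entry $1$ when $x \sim x'$, entry $2$ when $x = x'$, and entry $d_J(x,x')$ otherwise, i.e. the off-diagonal block $2I + D$; and, for $i = i'$, entry $0$ when $x = x'$ and entry $d_J(x,x')$ when $d_J(x,x') \ge 2$. The one delicate entry is $x \sim x'$ on the diagonal block: a length-$1$ walk is forbidden there (the first coordinate must return to $i$), so the entry is $2$ precisely when $x$ and $x'$ have a common neighbour in $J(m,r)$. I would supply this common neighbour explicitly: writing $S = F \cup \{b\}$ and $S' = F \cup \{c\}$ with $|F| = r-1$ and $b \ne c$, the $r$-set $(F \setminus \{a\}) \cup \{b,c\}$ (for any $a \in F$, using $r \ge 2$) is adjacent to both $S$ and $S'$. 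Thus adjacent vertices of $J(m,r)$ lie in a triangle, the diagonal entry equals $2 = 1+1$, and the diagonal block is $A + D$ rather than the $2A + D$ forced by the triangle-free cycle; the case $r = 1$ is $K_n \otimes K_m$, already covered by Theorem \ref{knkm}.

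Finally, because $J(m,r)$ is distance regular, Theorem \ref{johnpoly} gives $D = p(A)$, so $A$ and $D$ are simultaneously diagonalizable, and on the $\lambda_i$-eigenspace of $A$ the matrix $D$ acts as the scalar $\mu_0 = s$ when $i = 0$, $\mu_1 = -\tfrac{s}{m-1}$ when $i = 1$, and $\mu_2 = 0$ when $i \ge 2$, this being the eigenvalue correspondence $p(\lambda_0) = \mu_0$, $p(\lambda_1) = \mu_1$, $p(\lambda_i) = 0$ ($i \ge 2$) proved in Theorem \ref{johnpoly}. Reading off the eigenvalues of $H_0 = 2(n-1)I + nD + A$ on each such eigenspace produces $2(n-1) + ns + \lambda_0$, then $2(n-1) - \tfrac{ns}{m-1} + \lambda_1$, and $2(n-1) + \lambda_i$ for $i = 2, \dots, r$; while $H_j = A - 2I$ contributes $\lambda_i - 2$ for $i = 0, 1, \dots, r$, each with multiplicity $(n-1)$ times the multiplicity of $\lambda_i$ in $J(m,r)$. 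Taking the union over $H_0$ and the $n-1$ identical blocks $H_j$ yields exactly the claimed list. I expect the entire difficulty to sit in the second step, verifying the triangle property that pins down the diagonal block, since once the block form is secured the spectral computation is routine and parallels Theorem \ref{c2m}.
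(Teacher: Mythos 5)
Your proposal is correct and follows essentially the same route as the paper: the same block-circulant decomposition $D(K_n\otimes J(m,r))=Circ(A+D,\,2I+D,\dots,2I+D)$, the same reduction via Theorem \ref{bcir} to $H_0=2(n-1)I+nD+A$ and $H_j=A-2I$, and the same use of $D=p(A)$ from Theorem \ref{johnpoly} to read off the eigenvalues. The only difference is that you explicitly verify the diagonal block is $A+D$ by exhibiting a triangle through every edge of $J(m,r)$, a step the paper asserts without justification, so your write-up is if anything more complete.
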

\begin{proof}
Let $A$ and $D$ be the adjacency and distance matrix of $J(m,r).$ The distance matrix of $K_{n} \otimes J(m,r)$ is a matrix of size $n \binom{m}{r}\times n \binom{m}{r}$ and we make a suitable partition of the vertex set $V(K_{n} \otimes J(m,r))$ , $\pi=\{X_{1}, X_{2}, \cdots, X_{n}\}$ where $X_{i}=\left\{(i,1),(i,2), \cdots, \left(i,\binom{m}{r}\right)\right\}$. Then we can get the distance partitioned matrix with index by $\pi$ as follows:
    \begin{eqnarray*}\displaystyle {D(K_{n} \otimes J(m,r))}&=&{\begin{bmatrix}  {A + D} &{2I + D} &{2I + D} &\cdots & {2I + D}\\
 {2I + D} &{A + D} &{2I + D} &\cdots & {2I + D}\\
{2I + D} &{2I + D} &{A + D} &\cdots & {2I + D}\\
\vdots &\vdots &\vdots &\ddots &\vdots \\
{2I + D} &{2I + D} &{2I + D} &\cdots & {A + D}
 \end{bmatrix}}.
 \end{eqnarray*}
  Now $D(K_{n} \otimes J(m,r))$ is a real symmetric block circulant matrix. Then applying the Theorem \ref{bcir}, the eigenvalues of $D(K_{n} \otimes J(m,r))$ are the union of the eigenvalues of the matrices $H_{j}$ of size $\binom{m}{r} \times \binom{m}{r},$ for $j=0, 1, 2, \cdots, (n-1);$\\ where $H_{j}=(A+D) + 2(2I + D) \displaystyle\sum_{k=1}^{h-1} \cos\left(\frac{2k \pi}{n}j\right) + \left\{ \begin{array}{rr}
        0 ; &\mbox{ if $n=2h-1$ }\\
         (2I + D)(-1)^j ; &\mbox{ if $n=2h.$}~~~
\end{array} \right. $\\
By using same computation of the Theorem \ref{c2m}, we can get
\begin{eqnarray*}
H_{0}&=& 2(n-1)I + nD + A \\
\mbox{and}~~~ H_{j}&=&A-2I; ~\mbox{for}~ j=1, 2, \cdots, (n-1).
\end{eqnarray*} 
       
Since there exists a polynomial $p(x)$ (defined in the Theorem \ref{johnpoly}) such that $D=p(A).$ By using Theorem \ref{johnpoly}, we get $H_{0}= 2(n-1)I + n p(A) + A. $ The eigenvalues of $H_{0}$ are $2(n-1)+np(\lambda_{i})+\lambda_{i}$ for $i=0,1, \cdots, r; $ i.e., $2n-2 + ns +\lambda_{0}, 2n-2-\frac{ns}{m-1}+\lambda_{1}, 2n-2+\lambda_{i}$ for $i=2,3, \cdots, r.$
The eigenvalues of $H_{j}=A-2I$ are $\lambda_{i}-2$ for $i=0,1, \cdots,r.$
Hence, complete the proof.
\end{proof}
\begin{remark}
   \emph{The distance eigenvalue of the Johnson graph $J(m,r),$ $\mu_{1}=-\frac{s}{m-1}=-\frac{1}{m-1}\displaystyle \sum_{j=0}^{r}j\binom{r}{j}\binom{m-r}{j}=-\frac{1}{m-1}\displaystyle \sum_{j=0}^{r}r\binom{r-1}{j-1}\binom{m-r}{j}=-\frac{r}{m-1}\binom{m-1}{r}=-\binom{m-2}{r-1}$ is an integer, and $\mu_{0}, \mu_{2}$ are integers also. So the Johnson graph $J(m,r)$ is distance integral.}
\end{remark}
%%%%%%%%%%%%%%%%%%%%%%%%%%%%%%%%%%%%%%%%%%%%%%%%%%%%%%%%%%%%%%%%%%%%%%%%%%%%%

\begin{theorem}\label{hamdist}
Let $K_{n}$ be the complete graph and $H(d,q)$ be the Hamming graph with diameter $d$ and $q^d$ vertices. Consider the distinct adjacency eigenvalues of $H(d,q)$ be $\lambda_{i}=(d-i)(q-1),$ for $i=0, 1, \cdots, d$ and the distinct distance eigenvalues of $H(d,q)$ be $\mu_{0}=t= dq^{d-1}(q-1), \mu_{1}=-q^{d-1}, \mu_{2}=0.$ Then the distance eigenvalues of $K_{n} \otimes H(d,q)$ are $2n-2+nt+\lambda_{0},2n-2-q^{d-1}+\lambda_{1}, 2n-2+\lambda_{i}$ for $i=2,3, \cdots,d;$ and $\lambda_{i}-2$ for $i=0,1,\cdots, d.$ 
\end{theorem}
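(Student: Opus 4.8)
The plan is to mirror the argument of Theorem \ref{johndist} verbatim, replacing the Johnson graph by the Hamming graph $H(d,q)$ and invoking Theorem \ref{hampoly} in place of Theorem \ref{johnpoly}. Let $A$ and $D$ denote the adjacency and distance matrices of $H(d,q)$ under a fixed vertex ordering. First I would exhibit the distance matrix of $K_{n}\otimes H(d,q)$, partitioned according to the $n$ copies $X_{i}=\{(i,v):v\in V(H(d,q))\}$, as the real symmetric block circulant matrix whose diagonal blocks equal $A+D$ and whose off-diagonal blocks equal $2I+D$. This block form is justified exactly as in the earlier theorems: two vertices lying in distinct copies ($i\neq i'$) are joined by a walk of every length $\geq 1$ on the $K_{n}$ side, so their distance equals the $H(d,q)$-distance when the second coordinates differ and equals $2$ when they coincide, giving the block $2I+D$; two vertices in the same copy require a closed walk in $K_{n}$, which has length $0$ or any length $\geq 2$, forcing adjacent second coordinates to distance $2$ while leaving all larger distances unchanged, giving the block $A+D$. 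Here one uses $q\geq 3$, so that two adjacent vertices of $H(d,q)$ admit a walk of length $2$ through a common neighbour.

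Next I would apply Theorem \ref{bcir} to reduce the spectrum to the union of the spectra of the $q^{d}\times q^{d}$ matrices $H_{j}$, and carry out the same circulant cosine summation as in the proof of Theorem \ref{c2m}, splitting into the cases $n=2h-1$ and $n=2h$ and using $\sum_{k=1}^{h-1}\cos(\frac{2k\pi}{n}j)$ to collapse the off-diagonal contribution. As there, this yields, independently of the parity of $n$,
\[
H_{0}=2(n-1)I+nD+A \qquad\text{and}\qquad H_{j}=A-2I \quad (j=1,2,\ldots,n-1).
\]

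Then I would invoke Theorem \ref{hampoly}, which supplies a polynomial $p$ with $D=p(A)$, to rewrite $H_{0}=2(n-1)I+n\,p(A)+A$. Since $I$, $A$ and $p(A)$ are simultaneously diagonalizable, the eigenvalues of $H_{0}$ are $2(n-1)+n\,p(\lambda_{i})+\lambda_{i}$ as $\lambda_{i}$ ranges over the distinct adjacency eigenvalues of $H(d,q)$. Substituting the values $p(\lambda_{0})=\mu_{0}=t$, $p(\lambda_{1})=\mu_{1}=-q^{d-1}$ and $p(\lambda_{i})=\mu_{2}=0$ for $i=2,3,\ldots,d$ (all established within Theorem \ref{hampoly}) produces the eigenvalues of $H_{0}$, namely $2(n-1)+nt+\lambda_{0}$, $2(n-1)+n\mu_{1}+\lambda_{1}$ and $2(n-1)+\lambda_{i}$ for $i\geq 2$, while the eigenvalues of each $H_{j}=A-2I$ are simply $\lambda_{i}-2$. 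Taking the union over $j=0,1,\ldots,n-1$ gives the asserted spectrum.

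The block-circulant bookkeeping and the cosine summation are routine, being identical to Theorems \ref{c2m} and \ref{johndist}. The one place that genuinely uses the Hamming structure, and the main point to get right, is the identity $H_{0}=2(n-1)I+n\,p(A)+A$ together with the correct evaluation of $p$ at the adjacency eigenvalues; this is precisely what Theorem \ref{hampoly} provides, so the only care required is to confirm that its hypotheses $q,d\geq 3$ are in force and that the listed $\mu_{i}$ indeed coincide with $p(\lambda_{i})$.
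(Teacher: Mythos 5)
Your proposal is correct and follows essentially the same route as the paper: the same block-circulant decomposition with diagonal blocks $A+D$ and off-diagonal blocks $2I+D$, the same reduction via Theorem \ref{bcir} to $H_{0}=2(n-1)I+nD+A$ and $H_{j}=A-2I$, and the same use of $D=p(A)$ from Theorem \ref{hampoly}; your added justification of the block structure is a welcome detail the paper omits. Note that your (correct) computation gives the eigenvalue $2(n-1)+n\mu_{1}+\lambda_{1}=2n-2-nq^{d-1}+\lambda_{1}$, which indicates a missing factor of $n$ in the term $2n-2-q^{d-1}+\lambda_{1}$ as printed in the theorem statement (compare the Johnson analogue, where the factor $n$ does appear).
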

\begin{proof} 
Let $A$ and $D$ be the adjacency and distance matrix of $H(d,q).$ The distance matrix of $K_{n} \otimes H(d,q)$ is a matrix of size $n {q^d}\times n{q^d}$, and we make a suitable partition of the vertex set $V(K_{n} \otimes H(d,q))$ , $\pi=\{X_{1}, X_{2}, \cdots, X_{n}\}$ where $X_{i}=\{(i,1),(i,2), \cdots, (i,{q^d})\}$. Then we can get the distance partitioned matrix with index by $\pi$ as follows:
    \begin{eqnarray*}\displaystyle {D(K_{n} \otimes H(d,q))}&=&{\begin{bmatrix}  {A + D} &{2I + D} &{2I + D} &\cdots & {2I + D}\\
 {2I + D} &{A + D} &{2I + D} &\cdots & {2I + D}\\
{2I + D} &{2I + D} &{A + D} &\cdots & {2I + D}\\
\vdots &\vdots &\vdots &\ddots &\vdots \\
{2I + D} &{2I + D} &{2I + D} &\cdots & {A + D}
 \end{bmatrix}}.
 \end{eqnarray*}
 Now $D(K_{n} \otimes H(d,q))$ is a real symmetric block circulant matrix. Then applying the Theorem\ref{bcir}, the eigenvalues of $D(K_{n} \otimes H(d,q))$ are the union of the eigenvalues of the matrices $H_{j}$ of size ${q}^{d} \times {q}^{d},$ for $j=0, 1, 2, \cdots, (n-1);$\\ where $H_{j}=(A+D) + 2(2I + D) \displaystyle\sum_{k=1}^{h-1} \cos\left(\frac{2k \pi}{n}j\right) + \left\{ \begin{array}{rr}
        0 ; &\mbox{ if $n=2h-1$ }\\
         (2I + D)(-1)^j ; &\mbox{ if $n=2h.$}~~~~
       \end{array} \right. $\\
By using same computation of the Theorem \ref{c2m}, we can get
\begin{eqnarray*}
H_{0}&=& 2(n-1)I + nD + A \\
\mbox{and}~~~~ H_{j}&=&A-2I; ~\mbox{for}~ j=1, 2, \cdots, (n-1).
\end{eqnarray*}
       
Since there exists a polynomial $p(x)$ (defined in the Theorem \ref{hampoly}) such that $D=p(A).$ By using Theorem \ref{hampoly}, we get $H_{0}= 2(n-1)I + n p(A) + A. $ The eigenvalues of $H_{0}$ are $2(n-1)+np(\lambda_{i})+\lambda_{i}$ for $i=0,1, \cdots, d$; i.e., $2n-2 + nt +\lambda_{0}, 2n-2-q^{d-1}+\lambda_{1}, 2n-2+\lambda_{i}$ for $i=2,3, \cdots, d.$
The eigenvalues of $H_{j}=A-2I$ are $\lambda_{i}-2$ for $i=0,1, \cdots,d.$
Hence, complete the proof.
\end{proof}
\begin{remark}
    \emph{ The two results, Theorem \ref{johndist} and Theorem \ref{hamdist}, give us two infinite families of arbitrary diameter graphs $K_{n} \otimes J(m,r)$ and $K_{n} \otimes H(d,q)$ that are distance integral.}
\end{remark}

%%%%%%%%%%%%%%%%%%%%%%%%%%%%%%%%%%%%%%%%%%%%%%%%%%%%%%%%%%%%%%%%%%%%%%%%%%%%%

\section{Concluding Remarks}

In \cite{indu3}, G. Indulal and R. Balakrishnan found an infinite family of graphs with diameter $3$ that has integral adjacency spectra as well as integral distance spectra. Then they posed an open problem: ``Characterize graphs for which both adjacency and distance spectrum are integral. A weaker problem would be: Find new families of graphs for which the adjacency spectrum and the distance spectrum are both integral." To partially address this problem, we have identified two infinite families of graphs: $K_{n} \otimes J(m,r)$ and $K_{n} \otimes H(d,q)$, these two families have an arbitrary diameter. These graphs have both integral adjacency spectra and integral distance spectra. Since the complete graph $K_{n}$, the Johnson graph $J(m,r)$, and the Hamming graph $H(d,q)$ are adjacency integral, $K_{n} \otimes J(m,r)$ and $K_{n} \otimes H(d,q)$ are adjacency integral as well. In this paper, Theorem \ref{johndist} and Theorem \ref{hamdist} demonstrate that $K_{n} \otimes J(m,r)$ and $K_{n} \otimes H(d,q)$ are distance integrals. Additionally, we refer to the Hamming graph and Johnson graph which have an arbitrary diameter; these two families have both integral adjacency spectra and integral distance spectra.

\section*{Declarations}
\textbf{Conflict of interest:} The authors state that they possess no conflicts of interest.

%\begin{thebibliography}{Md.}
\bibliographystyle{plain}
\bibliography{Ref}

\end{document}